\documentclass[12pt]{amsart}
\usepackage{amsmath}
\usepackage{amsthm}
\usepackage{amssymb,bbm}
\usepackage{stmaryrd}
\usepackage{enumerate}
\newcommand{\E}{\mathbbm{E}}
\newcommand{\R}{\mathbbm{R}}
\newcommand{\N}{\mathbb{N}}
\renewcommand{\H}{\mathrm{H}}
\renewcommand{\d}{\mathrm{d}}
\newcommand{\e}{\mathrm{e}}

\newcommand{\dz}{\mathrm{d}z}
\newcommand{\SPAN}{\mathrm{span}}
\newcommand{\proj}{\mathrm{P}}

\theoremstyle{plain}
\newtheorem{theorem}{Theorem}[section]
\newtheorem{lemma}[theorem]{Lemma}
\newtheorem{corollary}[theorem]{Corollary}
\newtheorem{proposition}[theorem]{Proposition}
\newtheorem{conjecture}[theorem]{Conjecture}

\theoremstyle{definition}

\theoremstyle{remark}
\newtheorem{remark}[theorem]{Remark}

\begin{document}

\author{O. Guédon, F. Souli} \address{} \email{}

\title{An extension of a theorem of Chevet.} \

\address{Univ Gustave Eiffel, Univ Paris Est Creteil, CNRS, LAMA UMR8050 F-77447 Marne-la-Vallée, France}
\email{olivier.guedon@univ-eiffel.fr, fabien.souli@edu.univ-eiffel.fr}

\begin{abstract}
We adopt a modern approach to extend 
a comparison inequality of Chevet for Gaussian processes to a broader class of functions.
Furthermore, we consider centered Gaussian random vectors associated with a family of $n+1$ vectors on the unit sphere 
$S^{n-1}$ and investigate configurations of the vertices for which these new functionals  
could be maximum  for the regular simplex.
\end{abstract}

\maketitle

\keywords{}

\section{Introduction}
Gaussian random vectors play a significant role not only in probability and statistics but also in fields such as functional analysis and convex geometry. In these areas, many problems can be reformulated in terms of comparisons between the expectations of a function evaluated at two Gaussian random vectors. A typical example involves comparing the  expectations  $\E f(X)$ and $\E f(Y)$ for centered 
Gaussian random vectors $X, Y$ and a real-valued function $f$, under simple assumptions about the natural distances induced by the processes  $X$ and $Y$. 
In this article, we propose a modern approach to a result by Chevet \cite{Chevet1978} concerning the comparison of the expectations of the square of the projection of $\max_{1 \le i \le n} X_i$ onto the orthogonal complement of the reproducing kernel Hilbert space $H_X$. Along the way, we extend her result to a new class of functions. In the second part of the article, we investigate configurations of a polytope with $n+1$ vertices $u_1, \ldots, u_{n+1}$ on the unit sphere $S^{n-1}$ for which these quantities are maximized over the family of Gaussian processes defined by $X_i = \langle G, u_i \rangle$, where $G$ is a standard $\mathcal{N}(0, \mathrm{Id})$ Gaussian random vector in $\R^{n}$. This is in the same spirit as the simplex mean width conjecture.

\subsection{Notations}
We begin by introducing some notations that will be used throughout
this article.  Let $n\in \N$, and consider $\R^n$ equipped with the
canonical inner product~$\langle \cdot , \cdot \rangle$. The  Euclidean norm of a vector $x$ is denoted by $|x|$.
The all ones vector $(1, 1, \dots, 1)$ is represented by $\mathbbm{1}$ while $[n]$ refers to the set of integers from $1$ to $n$.
For a centered Gaussian  random vector $X$ of $\R^n$ with covariance matrix $\Sigma_X$,  we define for all
$i,j \in [n]$,
\[ 
\sigma_{ij}^X := \left[ \Sigma_X\right]_{ij} = \E \left[ X_i X_j \right] \qquad 
\mathrm{and}
\qquad
\d_{X}(i,j) := \left( \E \left[ (X_i-X_j)^2 \right] \right)^{1/2}.
\] 
The orthogonal projection
of $\mathrm{L}^{2}\left( \Omega, \mathbbm{P} \right)$ onto
$\H_X:=\mathrm{span}(X_1, X_2, \dots, X_n)$ (resp. $\H_X^{\perp}$)
will be denoted by $\proj_{\H_X}$ (resp. $\proj_{\H_X^\perp}$).

We denote by $G$  the standard $\mathcal{N} (0, \mathrm{Id})$ Gaussian random vector in
 $\R^n$. Throughout this work, we will refer to a particular Gaussian random vector, associated to a regular simplex with vertices on the unit sphere. This corresponds to a centered Gaussian random vector $S$ with covariance matrix defined, for all $i, j \in [n]$, by:
 \begin{equation} \label{gaussian_regular_simplex}
 \sigma_{ij}^{S} = 
    \begin{cases}
      1 & \textrm{if }  i=j \\
      -\frac{1}{n-1} & \textrm{if } i\neq j.
    \end{cases} 
\end{equation} 
For all $\beta>0$, we define the function $f_{\beta} :  \R^n \to  \R$ by 
\[
f_\beta(x) =  \frac{1}{\beta} \log\left( \sum_{k=1}^n \e^{\beta x_k} \right).
\]
It is a $C^\infty$ function that approximates the $\max$ function on $\R^n$ as $\beta$ goes to infinity, as shown by the following inequality: for
all $x \in \R^n$,
\begin{equation}
	\label{eq:approx}
 \max_{1 \le i \le n}(x_i) \leq f_{\beta}(x) \leq
\frac{\log(n)}{\beta}+ \max_{1 \le i \le n}(x_i). 
\end{equation}
Moreover, it is of moderate growth in the sense that for each $a > 0$, 
\[
\lim_{|x| \to + \infty} f_\beta(x) e^{-a |x|^2} = 0.
\]
All its  partial derivatives up to order 2 are also of moderate growth.

\subsection{Main results and organization of the paper}

For general background on Gaussian processes, we refer the reader to \cite{MarcusRosen2006, TalagrandSpin}. We consider
two Gaussian random vectors $X, Y$ as well as a function $f$ and we want to
compare $\E f(X) $ with $\E f(Y) $. When $f=\max$ and $X, Y$ are
centered, the following result is known as the Fernique-Sudakov inequality, see \cite{Fernique1974} where Fernique attributes this extension of Slepian's Lemma to Chevet while in \cite{BadrikianChevet1974}, it is attributed to Sudakov \cite{Sudakov1971}.
\begin{theorem}\cite{Fernique1974} \label{theorem:slepian} Let
$X, Y$ be two centered Gaussian random vectors in $\R^n$ such that for all $i,j \in [n], \d_X(i,j) \leq \d_Y(i,j).$ Then
\begin{equation}
	\label{eq:Slepian}
\E \left[ \max_{1 \le i \le n}(X_i) \right] \leq \E \left[ 
\max_{1 \le i \le n}(Y_i) \right]. 
\end{equation}
\end{theorem}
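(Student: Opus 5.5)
We prove the Fernique--Sudakov inequality by Gaussian interpolation applied to the smooth approximation $f_\beta$ of the maximum, and let $\beta\to\infty$ at the end. We may assume $X$ and $Y$ are independent, defined on the same probability space. For $t\in[0,1]$ set
\[
Z(t)=\sqrt{t}\,Y+\sqrt{1-t}\,X, \qquad \varphi(t)=\E f_\beta(Z(t)).
\]
Since $f_\beta$ and its derivatives up to order two have moderate growth, differentiation under the expectation and Gaussian integration by parts are justified. They give the classical formula
\[
\varphi'(t)=\frac12\sum_{i,j=1}^n\bigl(\sigma^Y_{ij}-\sigma^X_{ij}\bigr)\,\E\,\partial_{ij}f_\beta(Z(t)).
\]
The goal is to show $\varphi'(t)\ge 0$. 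Then $\E f_\beta(X)\le \E f_\beta(Y)$, and by \eqref{eq:approx},
\[
\E\max_i X_i\le \E f_\beta(X)\le \E f_\beta(Y)\le \frac{\log n}{\beta}+\E\max_i Y_i .
\]
Letting $\beta\to\infty$ yields \eqref{eq:Slepian}.

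To control the sign of $\varphi'$, we compute the Hessian of $f_\beta$. Write $p_k(x)=\e^{\beta x_k}/\sum_l \e^{\beta x_l}$. Then
\[
\partial_{ij}f_\beta=\beta\bigl(p_i\delta_{ij}-p_ip_j\bigr).
\]
The key structural facts are that $\sum_j\partial_{ij}f_\beta=0$ for every $i$ (which reflects $f_\beta(x+c\mathbbm 1)=f_\beta(x)+c$), and that $\partial_{ij}f_\beta\le 0$ for $i\neq j$. Using the zero row sums, $\partial_{ii}f_\beta=-\sum_{j\ne i}\partial_{ij}f_\beta$. Substituting this and symmetrizing, we rewrite
\[
\sum_{i,j}a_{ij}\,\partial_{ij}f_\beta=-\frac12\sum_{i\neq j}\bigl(a_{ii}+a_{jj}-2a_{ij}\bigr)\partial_{ij}f_\beta ,
\]
valid for any symmetric matrix $(a_{ij})$. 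Applying this with $a=\Sigma_Y-\Sigma_X$, the coefficient $a_{ii}+a_{jj}-2a_{ij}$ equals $\d_Y(i,j)^2-\d_X(i,j)^2\ge 0$ by hypothesis. Hence
\[
\varphi'(t)=-\frac14\sum_{i\ne j}\bigl(\d_Y(i,j)^2-\d_X(i,j)^2\bigr)\,\E\,\partial_{ij}f_\beta(Z(t))\ \ge 0,
\]
since each $\partial_{ij}f_\beta\le 0$ for $i\neq j$.

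The main point, and the only place where the hypothesis bears only on distances rather than covariances, is this rewriting via the translation invariance $\sum_j\partial_{ij}f_\beta=0$. It is exactly what removes the diagonal covariance terms. The remaining steps are routine: justifying the interpolation formula (integration by parts, with growth controlled by moderate growth) and the limit $\beta\to\infty$ (direct from \eqref{eq:approx}, since $\E\max_i|X_i|<\infty$).
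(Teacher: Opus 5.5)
Your proof is correct. The paper does not prove this theorem itself; it cites \cite{Fernique1974}. Your argument is exactly the $f_\beta$-interpolation proof of \cite{chatterjee} that the paper points to, and your key rewriting via $\sum_j \partial^2_{ij} f_\beta = 0$ is the same device the paper uses in Proposition~\ref{proposition:interpolation}, where $\sigma^Y_{ij}-\sigma^X_{ij}$ is split as $\tfrac12(M_{ij}-N_i-N_j)$ so that the diagonal terms vanish. One small detail to add: $\tfrac{\d}{\d t}Z(t)$ is singular at $t=0,1$, so differentiate on $(0,1)$ and conclude using the continuity of $\varphi$ on $[0,1]$.
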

In \cite{slepian1962}, Slepian introduces the additional assumption that $\E \left[ X_i^2 \right] = \E  \left[ Y_i^2\right]$ and shows that the tail probability of $\max_{1 \le i \le n} X_i$ is smaller than that of $\max_{1 \le i \le n} Y_i$. 
Building on this result, one can derive Theorem \ref{theorem:slepian} with a multiplicative constant factor of 2 in \eqref{eq:Slepian}. 
Chevet \cite{Chevet1978} establishes a powerful consequence of these inequalities using a decoupling argument, in the  study of the operator norm of Gaussian random operators. This theory was later extended, with major contributions including Gordon's inequalities \cite{gordon1992} which provide comparison results for min-max functions. 
The methodology underlying all these proofs relies on an interpolation path from $X$ to $Y$, demonstrating that along this path, the expectation of the function is non-decreasing. This is achieved through a differentiation argument. 
However a key challenge arises as the max function is not  differentiable. To overcome this difficulty, the authors employed arguments based on the theory of distributions.  
Later (see e.g. \cite{LiQueffelec2004,LiQueffelec2018}), proofs of Gordon's inequalities were refined using a convolution argument which yields a smooth approximation of the max or min-max functions.
One of its more recent proofs uses explicit approximations of the max function and this is where the family of functions $\{f_\beta\}_{\beta>0}$ appears. In \cite{chatterjee}, it is shown that  if for all $i,j \in [n], \d_X(i,j) \leq \d_Y(i,j)$ then  for every $\beta >0$, 
$\E f_\beta (X) \le \E f_\beta (Y)$. A more sophisticated family of functions is introduced in \cite{peccati_turchi_2023} providing not only a new proof of Gordon's inequalities but also a stability result.

As noted in \cite{chevet1976}, Inequality \eqref{eq:Slepian} does not hold for squares. However, for a centered Gaussian random vector $X$, 
$\proj_{H_X} \left( \max_{1\leq i\leq n} (X_i) \right) \in \SPAN \{X_1, \ldots, X_n\}$ is also centered and
one has 
\[
\E \left[ \max_{1\leq i\leq n} (X_i) \right]
= \E\left[ \proj_{H_X^\perp} ( \max_{1\leq i \leq n} (X_i) ) \right].
\]
Thus, Theorem \ref{theorem:slepian} can be interpreted as a
comparison between the expectations of
$\displaystyle \proj_{H_X^\perp} ( \max_{1\leq i \leq n} (X_i) )$ 
and
$\displaystyle \proj_{H_Y^\perp} ( \max_{1\leq i \leq n} (Y_i) )$ for centered Gaussian random vectors $X, Y$.
From this perspective, Chevet \cite{chevet1976} established a
comparison inequality involving squares and extended Theorem \ref{theorem:slepian} to the following result.

\begin{theorem}\cite{chevet1976}\label{theorem:chevet}
  Let $X, Y$ be two centered Gaussian random vectors in $\R^n$ such that for all
$i,j \in [n], \d_X(i,j) \leq \d_Y(i,j).$ 
 Then
  \[ \E \left[ \left( \proj_{H_X^\perp}(\max_{1\leq i \leq n} (X_i)
      ) \right)^2 \right] \leq \E \left[ \left(
      \proj_{H_Y^\perp}(\max_{1\leq i \leq n} (Y_i) )
    \right)^2 \right].\]
\end{theorem}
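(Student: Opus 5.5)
We would work with the smooth approximations $f_\beta$ and pass to the limit $\beta\to\infty$ at the end. By \eqref{eq:approx}, $f_\beta(X)\to\max_i X_i$ in $\mathrm{L}^2$. Since $\proj_{\H_X^\perp}$ is a contraction, it therefore suffices to prove
\[
\Phi_X(f_\beta):=\E\left[\left(\proj_{\H_X^\perp} f_\beta(X)\right)^2\right]\le \Phi_Y(f_\beta)\qquad\text{for every }\beta>0 .
\]
The first step is to make $\Phi_X$ explicit. By Gaussian integration by parts, $\E[f(X)X_i]=\sum_j\sigma^X_{ij}\E[\partial_jf(X)]$. Hence $\proj_{\H_X}f(X)=\langle v_X,X\rangle$ with $v_X=\E\nabla f(X)$, and
\[
\Phi_X(f)=\E[f(X)^2]-v_X^{T}\Sigma_X v_X .
\]
For $f=f_\beta$ we have $\nabla f_\beta=p(x)$, a probability vector, $\sum_j\partial_{ij}f_\beta=0$, and $f_\beta(x+t\mathbbm 1)=f_\beta(x)+t$. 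Consequently every quadratic form $\sum_{ij}\sigma_{ij}A_{ij}$ that appears with $A$ having zero row sums can be rewritten as $-\tfrac12\sum_{ij}\d(i,j)^2A_{ij}$. This is what will let the hypothesis on distances enter.

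To obtain a usable formula for $\Phi_X$, we would remove the zeroth and first Wiener chaos through the Ornstein--Uhlenbeck/correlated-copies representation. Let $X^s=sX+\sqrt{1-s^2}X'$ with $X'$ an independent copy of $X$. Then
\[
\mathrm{Var} f(X)=\int_0^1 \E\langle \nabla f(X),\Sigma_X\nabla f(X^s)\rangle\,\d s .
\]
Subtracting $v_X^T\Sigma_Xv_X$, which is the first-chaos contribution, and differentiating once more in $s$ gives
\[
\Phi_X(f)=(\E f(X))^2+\int_0^1\!\!\int_0^s \sum_{i,j,k,l}\sigma^X_{ik}\sigma^X_{jl}\,\E\bigl[\partial_{ij}f(X)\,\partial_{kl}f(X^u)\bigr]\,\d u\,\d s .
\]
Applying the zero-row-sum identity twice expresses the double integral entirely through $\d_X^2$ and the Hessians of $f_\beta$, which have the explicit form $\beta(p_i\delta_{ij}-p_ip_j)$. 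The term $(\E f_\beta(X))^2$ is monotone by the result of \cite{chatterjee}, because $\E f_\beta\ge \E\max\ge 0$.

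Next we would interpolate: $Z_t=\sqrt t\,Y+\sqrt{1-t}\,X$ with $X,Y$ independent, so that $\d_{Z_t}^2=t\,\d_Y^2+(1-t)\,\d_X^2$ is nondecreasing in $t$. We would then differentiate $t\mapsto\Phi_{Z_t}(f_\beta)$ using the heat-equation identity
\[
\frac{\d}{\d t}\E g(Z_t)=\tfrac12\sum_{ij}(\sigma^Y_{ij}-\sigma^X_{ij})\E\,\partial_{ij}g(Z_t),
\]
again converted to distances, $-\tfrac14\sum_{ij}(\d_Y^2-\d_X^2)(i,j)\,\E\,\partial_{ij}g$. Differentiations under the expectation are justified by the moderate growth of $f_\beta$ and its derivatives. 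The goal is to show that the resulting integrand pairs the nonnegative weights $\d_Y^2-\d_X^2$ with terms of a definite sign, built from $\partial_{ij}f_\beta\le 0$ for $i\ne j$ and from products of such terms.

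The main obstacle is this sign analysis. The derivative of the first-chaos correction produces third derivatives of $f_\beta$, and $\E f_\beta^2$ alone is not monotone; the cancellation must come from combining the two. We would first try the $(s,u)$-representation above, in which only second derivatives appear and the $\Sigma$-dependence is quadratic through $\d^2$. We would then check that each off-diagonal product $\partial_{ij}f\,\partial_{kl}f$, weighted by $\d^2$, has a favorable sign after symmetrization in $(X,X^u)$, using the explicit softmax structure.
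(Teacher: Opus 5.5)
Your reduction to the smooth approximants $f_\beta$ and the passage to the limit $\beta\to\infty$ are fine. The gap is that the proposal stops exactly where the proof has to happen: the sign of $\frac{\d}{\d t}\Phi_{Z_t}(f_\beta)$ is never established, only described as something you ``would try'' and ``would check''.

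The third-derivative obstacle you identify disappears if you differentiate $\proj_{H_{Z_t}^\perp}f(Z_t)=f(Z_t)-\langle Z_t,\E\nabla f(Z_t)\rangle$ directly. The term containing $\frac{\d}{\d t}\E\nabla f(Z_t)$ gets multiplied by $\E[\proj_{H_{Z_t}^\perp}f(Z_t)\,Z_t]=0$, so it drops out. Your $(s,u)$ chaos representation goes the other way: differentiating it in $t$ produces third and fourth derivatives of $f_\beta$.

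Even with the correct formula, the inequality you chose to prove, the unshifted $\Phi_X(f_\beta)\le\Phi_Y(f_\beta)$, is stronger than needed and has no termwise sign. Gaussian integration by parts in $X$ and $Y$, together with $\sum_i\partial_if=1$ and $\sum_i\partial^2_{ij}f=0$, gives
\[
\frac{\d}{\d t}\Phi_{Z_t}(f_\beta)=\frac12\sum_{i\ne j}\bigl(\d_Y(i,j)^2-\d_X(i,j)^2\bigr)\Bigl(\E p_i\,\E p_j+\E\bigl[p_ip_j\bigl(\beta\,\proj_{H_{Z_t}^\perp}f_\beta(Z_t)-1\bigr)\bigr]\Bigr).
\]
The factor $\beta\,\proj_{H_{Z_t}^\perp}f_\beta-1$ can be negative. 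For $n=2$ it equals $\log\bigl(2\cosh(\beta(Z_1-Z_2)/2)\bigr)-1$, which is $\log 2-1<0$ near the diagonal. So the \emph{softmax structure} you appeal to does not give a sign here.

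The missing idea is to shift by the constant $\lambda=1/\beta$ and interpolate $\E\bigl[(\proj_{H_{Z_t}^\perp}f_\beta(Z_t)+1/\beta)^2\bigr]$ instead. The shift adds $\frac1\beta\,\E\,\partial^2_{ij}f_\beta=-\E[p_ip_j]$ to each off-diagonal term, which exactly cancels the bad $\E[p_ip_j]$, because $p_ip_j+\frac1\beta\partial^2_{ij}f_\beta=0$. What remains is $\E p_i\,\E p_j+\beta\,\E[\proj_{H_{Z_t}^\perp}f_\beta(Z_t)\,p_ip_j]$. This is nonnegative, since $\proj_{H_{Z_t}^\perp}f_\beta(Z_t)=f_\beta(Z_t)-\sum_i(Z_t)_i\E p_i\ge f_\beta(Z_t)-\max_i(Z_t)_i\ge 0$. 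The shifted inequality still gives Chevet's theorem, because $1/\beta\to0$ and $f_\beta\to\max$ uniformly.

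Without this device, or some other argument for the unshifted quantity, which you have not supplied, the proposal is not a proof.
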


Our first contribution consists in providing not only a new proof of Theorem \ref{theorem:chevet} by using
modern methods but also in extending the result to a new class of functions. The method of proof is based on the classical interpolation technique mentioned above, combined with
a differentiation argument that is easier to carry due to the assumptions on the partial derivatives of the functions.  The result is
stated as follows and will be proven in Section \ref{sec:proof}.

\begin{theorem}\label{theorem:main}
  Let $X, Y$ be two centered Gaussian random vectors in $\R^n$ such that  for all
$i,j \in [n],d_X(i,j) \leq \d_Y(i,j).$ 
Assume that $f : \R^n \to \R$ is a $C^2$ function, that $f$ and all its partial derivatives up to order 2 are of moderate growth, and that it satisfies the following properties: 
  \begin{enumerate}[(a)]
  \item \label{thm:a_somme}
    $\qquad \mathit{for \ all \ } x\in \R^n, \quad \mathit{for \ all \ } u \in \R, \qquad
    f(x+ u \mathbbm{1})=f(x)+ u, $
  \item \label{thm:b_positive}
    $ \qquad \mathit{for \ all \ } i \in [n], \qquad
    \partial_i f\geqslant 0, $
  \item \label{thm:c_negative}
    $ \qquad \mathit{for \ all \ } i \ne j \in [n], \quad  \partial_{ij}^2 f\leq 0, $
  \item \label{thm:d_max}
    $ \qquad \mathit{for \ all \ } x\in \R^n, \qquad f(x)\geqslant \displaystyle
    \max_{1\leq i \leq n } (x_i) ,$
  \item \label{thm:e_lambda}
    $ \qquad \mathit{there \ exists \ } \lambda \in \R, \mathit{\ such \ that \ for \ all \ } i \ne j \in 
    [n], \quad  \partial_i f\partial_j
    f +\lambda \, \partial^2_{ij} f \leq 0.$
  \end{enumerate}
  Then
  \[ \mathbb{E} \left[  \left( \proj_{H_X^\perp} (f(X)) +\lambda \right) ^2 \right]
    \leq \mathbb{E} \left[  \left( \proj_{H_Y^\perp} (f(Y)) +\lambda
    \right)^2 \right]. \]
\end{theorem}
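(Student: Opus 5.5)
The plan is to first rewrite both sides in a form that only involves expectations of $f$ and of its derivatives, and then to use the classical Gaussian interpolation. Fix a centered Gaussian vector $Z$ with covariance $\Sigma_Z=(\sigma^Z_{ij})$ and set $a_j(Z):=\E[\partial_j f(Z)]$. By Gaussian integration by parts (legitimate by the moderate growth assumptions),
\[ \E[f(Z)Z_i]=\sum_j \sigma^Z_{ij}\,a_j(Z). \]
Hence $f(Z)-\sum_j a_j(Z) Z_j$ is orthogonal to every $Z_i$, so $\proj_{H_Z}(f(Z))=\sum_j a_j(Z)Z_j$, even when $\Sigma_Z$ is degenerate. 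This projection is centered, so $\E[\proj_{H_Z^\perp}f(Z)]=\E f(Z)$, and therefore
\[ \Phi(Z):=\E\big[(\proj_{H_Z^\perp}f(Z)+\lambda)^2\big]=\E\big[(f(Z)+\lambda)^2\big]-\sum_{i,j}\sigma^Z_{ij}a_i(Z)a_j(Z). \]
Two consequences of (a) will be used constantly: $\sum_j\partial_jf=1$ and $\sum_j\partial^2_{ij}f=0$. Together with (b), the first one shows that $a(Z)$ is a probability vector.

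Next I take $X,Y$ independent and set $Z_t=\sqrt t\,Y+\sqrt{1-t}\,X$, with $\tau_{ij}=\sigma^Y_{ij}-\sigma^X_{ij}$ and $a=a(Z_t)$. For a smooth $g$ of moderate growth, $\frac{d}{dt}\E g(Z_t)=\frac12\sum_{i,j}\tau_{ij}\E\,\partial^2_{ij}g(Z_t)$. I differentiate the two pieces of $\Phi(Z_t)$ separately.

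For the first piece, $\partial^2_{ij}(f+\lambda)^2=2(\partial_if\partial_jf+(f+\lambda)\partial^2_{ij}f)$.

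For the second piece, $-a^T\Sigma_{Z_t}a$, differentiation gives $-\sum\tau_{ij}a_ia_j-2\sum_k(\Sigma_{Z_t}a)_k\,a_k'$, where $a_k'=\frac12\sum_{i,j}\tau_{ij}\E\,\partial^3_{ijk}f(Z_t)$. I would handle the cross term by integration by parts backwards: $\sum_k(\Sigma a)_k\E\,\partial_k h(Z_t)=\E[(a\cdot Z_t)h(Z_t)]$ with $h=\partial^2_{ij}f$.

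Collecting everything, the derivative should be
\[ \frac{d}{dt}\Phi(Z_t)=\sum_{i,j}\tau_{ij}\,\E\big[M_{ij}\big], \qquad M_{ij}:=\partial_if\partial_jf-a_ia_j+(f-a\cdot Z_t+\lambda)\,\partial^2_{ij}f , \]
with all functions evaluated at $Z_t$. The point is that the matrix $M$ is symmetric with zero row sums. Indeed, $\sum_j\partial_if\partial_jf=\partial_if$, $\sum_j a_ia_j=a_i$, and the scalar factor multiplies $\partial^2_{ij}f$, whose row sums vanish. For such matrices one has $\sum\tau_{ij}M_{ij}=-\frac12\sum_{i\ne j}\delta_{ij}M_{ij}$, where $\delta_{ij}=\d_Y(i,j)^2-\d_X(i,j)^2\ge0$. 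So it suffices to show $\E[M_{ij}]\le0$ for $i\ne j$.

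This sign check is the heart of the proof and is where hypotheses (b)--(e) enter.
\begin{itemize}
\item By (e), $\partial_if\partial_jf\le-\lambda\partial^2_{ij}f$, so $M_{ij}\le -a_ia_j+(f-a\cdot Z_t)\partial^2_{ij}f$.
\item By (b), $a_ia_j\ge0$.
\item By (d) and the fact that $a$ is a probability vector, $f(Z_t)-a\cdot Z_t\ge\max_k (Z_t)_k-a\cdot Z_t\ge0$.
\item By (c), $\partial^2_{ij}f\le0$.
\end{itemize}
Hence $\E M_{ij}\le0$, so $\Phi(Z_t)$ is nondecreasing, and comparing $t=0$ with $t=1$ gives the theorem.

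I expect the main obstacle to be the careful bookkeeping of the derivative of $a^T\Sigma_{Z_t}a$, in particular recognizing the third-derivative term as $\E[(a\cdot Z_t)\partial^2_{ij}f]$. Without that identification the projection $\proj_{H^\perp}f$ does not appear inside $M_{ij}$. One also has to justify differentiating under the expectation: this needs moderate growth of the third derivatives, or else one avoids third derivatives by moving one derivative back via integration by parts before differentiating in $t$. I would handle this by a standard approximation, or by doing the integration by parts first.
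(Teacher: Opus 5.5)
Your proof is correct. It arrives at exactly the paper's formula for the derivative and then runs the same sign check; the difference is in how the derivative is computed.

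\textbf{Your route versus the paper's.} You split $\Phi$ using $\Phi(Z)=\E[(f(Z)+\lambda)^2]-a^T\Sigma_Z a$, an identity the paper only records afterwards in Remark~\ref{remark:main_theorem}. You then differentiate each piece with the heat-equation formula. This forces you to differentiate $a_k(t)=\E[\partial_k f(Z_t)]$ through $\E[\partial^3_{ijk}f(Z_t)]$, and then to undo this by a backwards integration by parts.

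The paper instead differentiates the random variable $\proj_{H_{Z_t}^\perp}(f(Z_t))+\lambda=f(Z_t)-\langle Z_t,a(t)\rangle+\lambda$ inside the square.
\begin{enumerate}[(i)]
\item The contribution of $\langle Z_t,a'(t)\rangle$ vanishes at once, because $\E[(\proj_{H_{Z_t}^\perp}(f(Z_t))+\lambda)Z_t]=0$. This is an envelope effect: $\Phi(Z_t)=\min_v\E[(f(Z_t)-\langle v,Z_t\rangle+\lambda)^2]$.
\item One Gaussian integration by parts, in $X$ and in $Y$, applied to $\proj_{H_{Z_t}^\perp}(f(Z_t))\,(\partial_if(Z_t)-a_i)$, produces the factor $f-\langle a,Z_t\rangle$ in front of $\partial^2_{ij}f$. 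No derivative of order three ever appears.
\end{enumerate}
So the paper's route works under the stated $C^2$ hypotheses, while yours needs $C^3$ or an extra limiting step.

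What your route buys: your observation that $\proj_{H_Z}(f(Z))=\sum_j a_jZ_j$ holds even for degenerate $\Sigma_Z$ lets you skip the paper's reduction to positive definite covariances. Orthogonality is all that is needed; the coefficients need not be unique. Working with $\Sigma_t=(1-t)\Sigma_X+t\Sigma_Y$ also avoids the singular factors $1/\sqrt{t}$ and $1/\sqrt{1-t}$ of $\frac{d}{dt}Z_t$.

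\textbf{Two small corrections.}
\begin{enumerate}[(i)]
\item The matrix $M$ does not have zero row sums pointwise. Your own computation gives $\sum_j M_{ij}=\partial_if(Z_t)-a_i$, which vanishes only after taking expectations. This suffices, because the identity $\sum_{i,j}\tau_{ij}E_{ij}=-\frac{1}{2}\sum_{i\ne j}\delta_{ij}E_{ij}$ is applied to $E=\E[M]$.
\item If you remove the $C^3$ requirement by mollification, do not run the whole argument on the smoothed function, because hypothesis (e) is not stable under convolution with the same $\lambda$. For example, take $f_\beta$ with $n=2$ and $\lambda=1/\beta$. Condition (e) for $x\mapsto\E f_\beta(x-\varepsilon W)$ then reads $\mathrm{Cov}(p_1,p_2)\geq 0$, where $p_k=p_k(x-\varepsilon W)$. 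But $p_1+p_2=1$ makes this covariance negative. Instead, use the approximation only to establish the integrated identity $\Phi(Z_1)-\Phi(Z_0)=\int_0^1\sum_{i,j}\tau_{ij}\E[M_{ij}]\,dt$, which involves derivatives of order at most two. Pass to the limit using moderate growth, and perform the sign check on $f$ itself.
\end{enumerate}
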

Due to the simple form of the partial derivatives of $f_\beta$, the assumptions $(a)-(e)$ of Theorem \ref{theorem:main} are easily verified for $f_\beta$ by setting $\lambda = 1/ \beta$ in $(e)$. This holds for all $\beta >0$. 
A slight improvement shows the following result, which generalizes Chevet's comparison Theorem \ref{theorem:chevet}. 
\begin{corollary} 
\label{theorem:application_main_theorem}
Let $\mu \in [0,+\infty)^n$ and $X, Y$ be two centered Gaussian random
vectors in $\R^n$ such that for all
$i,j \in [n], \d_{X}(i,j) \leq \d_{Y}(i,j).$
Then, for all $\beta >0$ one has
\[
\E \left[ \left(\proj_{H_X^\perp}(f_{\beta}(X+\mu))+\frac{1}{\beta} \right)^2 \right]
\leq 
\E \left[ \left(\proj_{H_Y^\perp}(f_{\beta}(Y+\mu))+\frac{1}{\beta} \right)^2 \right]
\]
and in particular
\[
\E  \left[ \left( \proj_{H_X^\perp}( \max_{1\leq i \leq n}(X_i+\mu_i )) \right)^2 \right]
\leq 
\E \left[ \left( \proj_{H_Y^\perp}(\max_{1\leq i \leq n}( Y_i+\mu_i )) \right)^2  \right].
\]
\end{corollary}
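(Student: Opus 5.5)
The plan is to apply Theorem \ref{theorem:main} to the shifted function $g(x) := f_\beta(x+\mu)$ with $\lambda = 1/\beta$, and then let $\beta \to \infty$ to obtain the statement for the maximum. Since $g(X) = f_\beta(X+\mu)$, the first inequality is exactly the conclusion of Theorem \ref{theorem:main} for $g$. So it suffices to check that $g$ satisfies the hypotheses.

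Smoothness and moderate growth of $g$ and of its derivatives up to order 2 follow from those of $f_\beta$, since a translation preserves moderate growth. Write $p_i(x) = \e^{\beta(x_i+\mu_i)}/\sum_k \e^{\beta(x_k+\mu_k)}$. Then the derivatives are
\[
\partial_i g = p_i, \qquad \partial^2_{ij} g = -\beta p_i p_j \ (i\neq j).
\]
We check the hypotheses in turn.
\begin{enumerate}[(a)]
\item This holds because $\mu$ is fixed: $f_\beta(x+u\mathbbm{1}+\mu) = f_\beta(x+\mu)+u$.
\item This holds since $p_i \ge 0$.
\item This holds since $-\beta p_ip_j \le 0$.
\item This is where $\mu \in [0,+\infty)^n$ is used. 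By \eqref{eq:approx}, $g(x) \ge \max_i (x_i+\mu_i) \ge \max_i x_i$.
\item With $\lambda = 1/\beta$ one gets $p_ip_j - \frac{1}{\beta}\beta p_ip_j = 0 \le 0$.
\end{enumerate}
Thus Theorem \ref{theorem:main} gives the first claim.

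For the second claim, set $M_X := \max_i(X_i+\mu_i)$. By \eqref{eq:approx}, $0 \le f_\beta(X+\mu) - M_X \le \log(n)/\beta$ pointwise, so $f_\beta(X+\mu) \to M_X$ in $\mathrm{L}^2(\Omega,\mathbbm{P})$ as $\beta\to\infty$. The operator $\proj_{H_X^\perp}$ is a contraction on $\mathrm{L}^2$, so $\proj_{H_X^\perp}(f_\beta(X+\mu)) + 1/\beta \to \proj_{H_X^\perp}(M_X)$ in $\mathrm{L}^2$. Hence the left-hand side converges to $\E[(\proj_{H_X^\perp}(M_X))^2]$, and the same holds for $Y$. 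Passing to the limit in the first inequality yields the second.

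The argument has no real obstacle. The one point needing care is hypothesis (d): it is exactly why $\mu$ must have nonnegative entries, since a negative $\mu_i$ could break $g(x) \ge \max_i x_i$. The only other point to justify is that the $\mathrm{L}^2$ convergence passes through the projection.
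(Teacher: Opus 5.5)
Your proposal is correct and matches the paper's proof: the paper also applies Theorem~\ref{theorem:main} to $h_\beta(x)=f_\beta(x+\mu)$ with $\lambda=1/\beta$. It checks (a)--(e) using $\partial_i h_\beta=p_i(x+\mu)$ and $\partial^2_{ij}h_\beta=-\beta p_ip_j$, uses $\mu\ge 0$ only for (d), and then lets $\beta\to+\infty$. Your $\mathrm{L}^2$-contraction argument for the limit fills in a step the paper only asserts, namely that the inequality passes to the limit through the projection.
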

The proof of this result is  provided in Section \ref{sub:appl}, where other functions satisfying the hypotheses of Theorem \ref{theorem:main} are also discussed.

A second objective of the paper is to propose a new problem concerning the maximization of $\E f(X)$ over Gaussian random vectors subject to a geometric constraint.
This type of optimization problem is classical in probability theory and has connections with convex geometry.  
Let $K$ be a convex compact set in $\R^n$, with $0$ as an interior point and let $\|\cdot\|_K$ and $h_K$ denote its gauge and support functions, both with respect to the origin. Let also $K^o$ be the polar of $K$ with respect to the origin:
$K^o = \{y, \langle x, y \rangle\le 1 \mathrm{ \ for \ all \ } x \in K\}$. An important quantity in the Asymptotic Geometric Analysis is the $\ell$-norm associated to $K$  defined by $\ell(K) = \E \|G\|_K$. 
The geometric mean width of $K$, $W(K)$, is defined by 
\[
W(K) = \int_{S^{n-1}} (h_K(\theta) + h_K(-\theta)) d\sigma(\theta) = 
2 \int_{S^{n-1}} h_K(\theta) d\sigma(\theta)
\]
where $\sigma$ is the Haar probability measure on the unit sphere $S^{n-1}$.
Mean width and $\ell$-norm are thus related by the following formula:
\[ 
2 \ell(K^o)= 2 \E \left[ \sup_{x \in K} \langle G, x \rangle \right] = c_n \sqrt{n} \, W(K)
\]
where $c_n = \ell(B_2^n)/\sqrt{n} \to 1$ as $n$ goes to infinity. 
There are various geometric problems that investigate the shapes of convex sets maximizing the mean width among convex compact sets with specific geometric structure. For example, it has been known since the work of Barthe \cite{Barthe1998}
that, among  convex bodies $K$ in L\"owner position (those for which the ellipsoid of minimal volume containing $K$ is the unit Euclidean ball), 
the regular simplex with vertices on the unit sphere  has maximal mean width (see also \cite{SS} for the symmetric case). 

Another example involves  using the function
$f(x)=\max_{1\leq i \leq n} \lvert x_i \rvert$  
and maximizing the expression $\E f(X)$ among Gaussian random vectors in $\R^n$ with  unit variance coordinates.
This problem admits a simple solution, which follows from 
the Gaussian correlation inequality, proved by Royen \cite{royen2014}.
\begin{theorem}\cite{royen2014}
For every centered Gaussian random vector $X$ in $\R^n$ such that  $\E \left[ X_i^2  \right] = 1$ for all $i \in [n]$, one has
\[
 \E\left[  \max_{1\leq i \leq n} \lvert X_i \rvert \right]
\leq
\E\left[ \max_{1\leq i \leq n} \lvert G_i \rvert \right]
\]
where $G$ is a standard $\mathcal{N} (0, \mathrm{Id})$ in $\R^n$.
\end{theorem}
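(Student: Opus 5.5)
We need to prove Royen's consequence: for centered Gaussian $X$ with unit variances, $\E \max_i|X_i| \le \E\max_i|G_i|$.

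The plan is to compare distribution functions pointwise and then integrate. Since $\max_{i}|X_i|\ge 0$, we can write
\[
\E\left[\max_{1\le i\le n}|X_i|\right]=\int_0^{\infty}\mathbbm{P}\left(\max_{1\le i\le n}|X_i|>t\right)\d t
=\int_0^\infty\left(1-\mathbbm{P}\left(\bigcap_{i=1}^n\{|X_i|\le t\}\right)\right)\d t ,
\]
and the same identity holds for $G$. It therefore suffices to show that for every $t\ge 0$,
\[
\mathbbm{P}\left(\bigcap_{i=1}^n\{|X_i|\le t\}\right)\ \ge\ \prod_{i=1}^n\mathbbm{P}(|X_i|\le t)=\mathbbm{P}(|g|\le t)^n=\mathbbm{P}\left(\bigcap_{i=1}^n\{|G_i|\le t\}\right).
\]
The middle equality is where the unit-variance hypothesis enters: each $X_i$ is a standard Gaussian $g$. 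The last equality uses the independence of the coordinates of $G$.

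For the displayed inequality we invoke Royen's Gaussian correlation inequality. For a centered Gaussian measure $\gamma$ on $\R^n$ and symmetric convex sets $K,L$, it states $\gamma(K\cap L)\ge\gamma(K)\gamma(L)$. We apply it to the law of $X$ with the symmetric slabs $K_i=\{x:|x_i|\le t\}$. We then iterate by induction on $n$. Take $K=K_1\cap\dots\cap K_{n-1}$, which is still symmetric and convex, and $L=K_n$. This gives
\[
\mathbbm{P}\left(\bigcap_{i\le n}\{|X_i|\le t\}\right)\ge \mathbbm{P}\left(\bigcap_{i\le n-1}\{|X_i|\le t\}\right)\mathbbm{P}(|X_n|\le t),
\]
and the induction hypothesis finishes the argument. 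For slabs, this special case is in fact Šidák's older inequality, so only that case of Royen's result is really needed.

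Integrating the pointwise inequality $\mathbbm{P}(\max|X_i|>t)\le\mathbbm{P}(\max|G_i|>t)$ over $t\in[0,\infty)$ gives the claim. The only substantive step is the correlation inequality, which we take from \cite{royen2014}. There is a minor technical point when $\Sigma_X$ is degenerate: the law of $X$ is then a centered Gaussian measure on a subspace. The inequality still holds in that case, either by approximating with $X+\varepsilon G'$ normalized to unit variances and letting $\varepsilon\to0$, or directly, since the correlation inequality holds for degenerate Gaussian measures.
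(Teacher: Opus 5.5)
Your proof is correct and follows the route the paper indicates. The paper gives no proof beyond saying the statement follows from Royen's Gaussian correlation inequality, and your combination of $\E[\max_i|X_i|]=\int_0^\infty\mathbbm{P}(\max_i|X_i|>t)\,\d t$ with the iterated bound $\mathbbm{P}(\bigcap_i\{|X_i|\le t\})\ge\prod_i\mathbbm{P}(|X_i|\le t)$ is the standard way to make that deduction precise. Your remark that only the one-slab case (the Khatri--\v{S}id\'ak inequality) is needed is also accurate, so the conclusion does not actually depend on Royen's theorem.
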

Geometrically, this implies that among all families of $n$-tuples of points $\{u_1, \ldots, u_n\}$ on the unit sphere $S^{n-1}$, the mean width of 
the symmetric convex hull of $\{u_1, \ldots, u_n\}$ is maximized when the points $\{u_1, \ldots, u_n\}$ form an orthonormal basis of $\R^n$.

However, for the function $f(x)= \max_{1 \le i \le n} x_i$, the problem remains open and there exists a
conjecture.
\begin{conjecture}
	\label{Conjecture}
For every centered Gaussian random vector $X$ in $\R^n$ such that  $\E \left[ X_i^2 \right] = 1$ for all $i \in [n]$, one has
\begin{equation}
	\label{eq:simplexmeanwidthconj}
\E \left[ \max_{1\leq i \leq n} X_i \right]
\leq
\E \left[ \max_{1\leq i \leq n} S_i \right] 
\end{equation}
where we recall that $S$ is a centered Gaussian random vector with covariance matrix defined by~(\ref{gaussian_regular_simplex}).
\end{conjecture}
Geometrically, the conjecture asserts that among all $n$-tuples of points 
\linebreak
$\{u_1, \ldots, u_{n}\}$ on the unit sphere $S^{n-2}$,  the mean width of their convex hull  is maximized when the points form the vertices of a regular simplex. Indeed, let $\tilde{G}$ be
a standard $\mathcal{N}(0, \mathrm{Id})$ Gaussian random vector in $\R^{n-1}$ and define the Gaussian random vector $X$ in $\R^n$ by $X_i = \langle \tilde{G}, u_i \rangle$. When $\{u_1, \ldots, u_{n}\}$ are the vertices of a regular simplex in $\R^{n-1}$, the associated Gaussian random vector is $S$ and its covariance  is given by the equation \eqref{gaussian_regular_simplex}. Therefore Conjecture \ref{Conjecture} is the probabilistic form of the famous Simplex Mean
Width Conjecture. 
In \cite{klz2015}, the authors provide a detailed presentation of various equivalent probabilistic formulations of the conjecture. They prove an asymptotic version of the comparison inequality as $n$ goes to infinity (see Theorem 1.2 in \cite{klz2015}) as well as a particular case of the inequality 
\eqref{eq:simplexmeanwidthconj} (see Theorem 2.1 in \cite{klz2015}).
To our knowledge, these are the best known mathematical results regarding this conjecture and
we refer the reader to \cite{litvak2018} for a detailed presentation and history of the problem.

The new problem  we propose considers another probabilistic setting in which  the Gaussian random vector $S$ could be a maximizer.
\begin{conjecture}\label{conjecture:chevet_max_expectation}
For every centered Gaussian random vector $X$ in $\R^n$ such that  $\E \left[ X_i^2 \right] = 1$ for all $i \in [n]$, for all $\beta > 0$, one has
\[
\E \left[ \left( \proj_{H_X^\perp}   (f_{\beta}(X))+\frac{1}{\beta} \right)^2 \right]
\leq 
\E \left[ \left(\proj_{H_S^\perp} (f_{\beta}(S))+\frac{1}{\beta} \right)^2 \right]
\]
and in particular
\[ 
\E \left[ \left( \proj_{H_X^\perp} (\max_{1\leq i \leq n}(X_i)) \right)^2 \right]
\leq 
\E \left[ \left( \proj_{H_S^\perp}(\max_{1\leq i \leq n}(S_i)) \right)^2 \right].
\]
\end{conjecture}
We do not know a clear interpretation of this conjecture within the framework of convex geometry. 
Following the approach of~\cite{klz2015} and
using Theorem~\ref{theorem:main}, we provide a proof of this comparison inequality in a particular case.
\begin{theorem}\label{theorem:chevet_max_expectation_particular_case}
  Let $b_1, b_2, \ldots, b_n \in [-1,1]$ and let $X$ be a centered Gaussian random
  vector in $\R^{2n}$ with covariance matrix 
  \[ \begin{pmatrix}
    A_1 & 0 & 0 & \dots & 0 \\
    0 & A_2 & 0 & \dots & 0 \\
    0 & 0 & A_3 & \dots & 0 \\
    \vdots & &  &\ddots & \\
    0 & 0 & \dots & 0 & A_n
  \end{pmatrix} \in \mathcal{M}_{2n}(\mathbb{R})\]
where $A_i = \begin{pmatrix} 1 & b_i \\ b_i & 1 \end{pmatrix}$. Then, for all $\beta>0$, one has

\[ \E \left[ \left( \proj_{H_X^\perp}(f_{\beta} (X))+\frac{1}{\beta}
  \right)^2 \right] \leq \E \left[ 
  \left(\proj_{H_S^\perp}(f_{\beta}(S))+\frac{1}{\beta} \right)^2 \right]
\]
and in particular
\[ \E  \left[ \left( \proj_{H_X^\perp}\left(\max_{1\leq i \leq 2n}
      (X_i)\right) \right)^2  \right]
\leq \E \left[ \left(
    \proj_{H_S^\perp}\left(\max_{1\leq i \leq 2n}
      (S_i)\right) \right)^2 \right]. 
\]
\end{theorem}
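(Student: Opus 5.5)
We reduce the statement to Theorem~\ref{theorem:main} (in the form of Corollary~\ref{theorem:application_main_theorem} with $\mu=0$), following the strategy of \cite{klz2015}. Here $S$ is the regular simplex vector in $\R^{2n}$, so $\sigma^S_{ij}=-\frac{1}{2n-1}$ for $i\neq j$ and $\d_S(i,j)^2=2+\frac{2}{2n-1}=\frac{4n}{2n-1}$ for all $i\ne j$. A direct comparison fails, because $\d_X(2k-1,2k)^2=2-2b_k$ can be as large as $4>\frac{4n}{2n-1}$ when $b_k$ is close to $-1$. So we cannot compare $X$ with $S$ in one step, and the plan is to pass through an intermediate vector.

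\textbf{Step 1: monotonicity in the $b_k$.} Decreasing a parameter $b_k$ increases $\d_X(2k-1,2k)$ and leaves all other distances unchanged. Those other distances equal $\sqrt2$, since different blocks are independent with unit variances. By Corollary~\ref{theorem:application_main_theorem}, the left-hand side is therefore maximized when every $b_k=-1$. It then suffices to treat $X_{2k}=-X_{2k-1}$, where $(X_1,X_3,\dots,X_{2n-1})=(g_1,\dots,g_n)$ is standard Gaussian. In this case
\[
f_\beta(X)=\tfrac1\beta\log\sum_{k=1}^n 2\cosh(\beta g_k),
\qquad H_X=\SPAN(g_1,\dots,g_n).
\]

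\textbf{Step 2: the extremal case.} Geometrically, the vertices are now the cross-polytope $\{\pm e_k\}$ in $\R^n$, and we must show that the regular simplex in $\R^{2n-1}$ beats it. We split $\{\pm e_k\}$ into the two families $\{e_k\}$ and $\{-e_k\}$, and try to build a Gaussian vector $Z$ in $\R^{2n}$ with $\d_Z\le \d_S$ coordinatewise whose functional dominates that of the cross-polytope.

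A first attempt is a ``doubling'' device in the spirit of \cite{klz2015}. Write $S=(S',S'')$ with $S',S''\in\R^n$, and use the invariance of the functional under adding independent variables to $H^\perp$. Alternatively, express the cross-polytope functional through conditioning: after projecting out a common direction, $\max$ over $\{\pm g_k\}$ has the same law as $\max$ over two independent copies. The cleanest concrete route is the one of Theorem~2.1 in \cite{klz2015}. There one checks that $\E\max_k|g_k|$ is compared to $\E\max S_i$ via the representation $S_i=\sqrt{\tfrac{2n}{2n-1}}\,(h_i-\bar h)$ with $h$ standard in $\R^{2n}$, where $\bar h=\frac{1}{2n}\sum_j h_j$. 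The identity $f_\beta(x+u\mathbbm 1)=f_\beta(x)+u$, property~(a), shows that subtracting $\bar h$ only modifies the $H$-component. Hence
\[
\proj_{H_S^\perp} f_\beta(S)=\proj_{H_h^\perp} f_\beta\bigl(\sqrt{\tfrac{2n}{2n-1}}\,h\bigr).
\]

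\textbf{Step 3: comparing with $h$.} Next we compare the $2n$ independent standard coordinates $h$ with $(g,-g)$. The distances satisfy $\d_{(g,-g)}(k,n+k)=2$, while $\d_{\sqrt{2n/(2n-1)}\,h}=\sqrt{\frac{4n}{2n-1}}$ for all pairs. This pair of distances is the main obstacle: the only bad pairs are $(k,n+k)$, and Slepian-type comparison fails exactly there. To get around it, we would try to interpolate directly and compute the derivative along the path, as in the proof of Theorem~\ref{theorem:main}. The derivative is a sum over pairs $i\ne j$ of
\[
(\sigma^Y_{ij}-\sigma^X_{ij})\,\E\bigl[(\partial_if\partial_jf+\lambda\partial^2_{ij}f)\bigr]\quad\text{(up to sign)}.
\]
We would then exploit the symmetry $g\mapsto -g$ and the explicit form of $f_\beta$'s derivatives. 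The aim is to show that the $n$ positive contributions from the pairs $(k,n+k)$ are dominated by the $\binom{2n}{2}-n$ negative contributions from the other pairs, each of size $\frac{1}{2n-1}$. This uses exchangeability, so that all the expectations $\E[\partial_if\partial_jf+\lambda\partial^2_{ij}f]$ at the endpoints are comparable. Finally, letting $\beta\to\infty$ and using \eqref{eq:approx} gives the statement for $\max$.
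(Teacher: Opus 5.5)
Your Step 1 and the replacement of $S$ by $\sqrt{2n/(2n-1)}\,G$ via property (a) are correct and coincide with the paper's reduction: Corollary~\ref{theorem:application_main_theorem} forces all $b_k=-1$, and then Lemma~\ref{lem:distri} applies. The side routes floated in Step 2 (doubling, conditioning) are never used. The gap is Step 3, which carries essentially the whole proof and which you state only as an aim. Moreover, the plan as written cannot work, for two reasons.

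\textbf{The derivative is misstated.} By Proposition~\ref{proposition:interpolation} the derivative involves
\[
\E[\partial_if\,\partial_jf]-\E[\partial_if]\,\E[\partial_jf]+\lambda\,\E[\partial^2_{ij}f]+\E\bigl[\proj_{H_{Z_t}^\perp}(f(Z_t))\,\partial^2_{ij}f(Z_t)\bigr].
\]
For $f_\beta$ with $\lambda=1/\beta$, the combination you keep is $\partial_if\,\partial_jf+\lambda\,\partial^2_{ij}f=p_ip_j-p_ip_j$, which vanishes identically. What actually survives is
\[
\tilde\varphi'(t)=\frac12\sum_{i\neq j}(-M_{ij})\Bigl(\E[p_i]\,\E[p_j]+\beta\,\E\bigl[\proj_{H_{Z_t}^\perp}(f_\beta(Z_t))\,p_ip_j\bigr]\Bigr).
\]

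\textbf{Exchangeability and counting cannot decide the sign.} Since $M\mathbbm{1}=0$ and $\E[p_i]=1/(2n)$, the first part cancels. The symmetries of $Z_t$ (block permutations and swaps inside a block) only identify expectations within each class of pairs, same-block or cross-block. The total weights of the two classes are exactly opposite, $\mp\frac{4n(n-1)}{2n-1}$, which gives
\[
\tilde\varphi'(t)=\frac{4n(n-1)}{2n-1}\,\beta\,\E\bigl[\proj_{H_{Z_t}^\perp}(f_\beta(Z_t))\,p_1(p_3-p_2)\bigr].
\]
So if the expectations were ``comparable'' in the sense you suggest, the derivative would simply be zero. Everything rests on the inequality $\E[\proj_{H_{Z_t}^\perp}(f_\beta(Z_t))\,p_1p_3]\ge \E[\proj_{H_{Z_t}^\perp}(f_\beta(Z_t))\,p_1p_2]$, comparing a cross-block pair with a same-block pair. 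It must hold for every $t\in[0,1]$, not only at the endpoints. No symmetry of $Z_t$ maps one class of pairs to the other.

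\textbf{The missing idea is how to prove that inequality.} The paper proceeds in three steps.
\begin{enumerate}[(i)]
\item It uses the symmetry that exchanges the first two blocks and flips inside each, $(Z_1,Z_2,Z_3,Z_4,\dots)\mapsto(Z_4,Z_3,Z_2,Z_1,\dots)$. This rewrites the expectation with the factor $(p_1-p_4)(p_3-p_2)$.
\item It changes variables by swapping $z_2$ and $z_3$. This leaves $f_\beta(z)-\frac1{2n}\sum_i z_i$ and $\sum_i \e^{\beta z_i}$ invariant and flips the sign of $(\e^{\beta z_1}-\e^{\beta z_4})(\e^{\beta z_3}-\e^{\beta z_2})$. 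The integral therefore becomes one against $h(z)-h(\widehat z)$, where $h$ is the density of $Z_t$.
\item Each $2\times 2$ block of $\Sigma_{Z_t}$ has off-diagonal entry $t-1<0$. One checks that $h(z)/h(\widehat z)=\exp\bigl(\tfrac{1-t}{a^2-b^2}(z_1-z_4)(z_3-z_2)\bigr)$, so $h(z)-h(\widehat z)$ has the sign of $(z_1-z_4)(z_3-z_2)$, the same as the exponential factor. Together with $\proj_{H_{Z_t}^\perp}(f_\beta(Z_t))=f_\beta(Z_t)-\frac1{2n}\sum_i (Z_t)_i\ge 0$, the integrand is nonnegative.
\end{enumerate}
Without an argument of this kind, Step 3 does not establish $\tilde\varphi'\ge 0$.
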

The proof will be presented in Section \ref{sec:application} and is divided into two steps. First, using Theorem~\ref{theorem:main}, we show that it suffices to
establish  the inequality when all the $b_i$'s are equal to $-1$. Following \cite{klz2015}, and due to the particular structure of the functions, we can replace the target $S$ by a scalar multiple of a Gaussian random vector with independent entries. The specific form of the partial derivatives of the function $f_\beta$, combined with the symmetries of the covariance matrix, allows us to prove that the studied quantity is non decreasing along the new path.

\section{The comparison Theorem via an interpolation formula}
\label{sec:proof}

\subsection{An expression of the projection.}
We recall the classical integration by parts formula. For a centered Gaussian random vector $X$, for 
any $C^1$  function $g$ such that $g$ and its partial derivatives are of moderate growth, for any $i \in [n]$, one has
\begin{equation}
	\label{eq:IPP}
\E \left[ X_i g(X)\right] = \sum_{\ell=1}^{n} \E \left[ X_{\ell} X_i \right] \, \E \left[ \partial_\ell g(X) \right].
\end{equation}
We will use it to give a simple expression of $\proj_{H_X} (f(X))$ for a
centered Gaussian random vector.
\begin{lemma} \label{lemma:expression_projection} 
Let $X$ be a centered Gaussian random vector with a positive definite covariance
matrix. Assume that $f : \R^n \to \R$ is a  $C^1$  function and that $f$ and its first order partial derivatives are of moderate growth, then
\[ 
\proj_{H_X}(f(X)) = \sum_{k=1}^n X_k  \mathbb{E} \left[ 
\partial_k f(X) \right].
\]
\end{lemma}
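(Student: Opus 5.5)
The plan is to identify the projection through its defining orthogonality property rather than by computing a Gram inverse by hand. Since $\Sigma_X$ is positive definite, $X_1,\dots,X_n$ are linearly independent in $\mathrm{L}^2(\Omega,\mathbbm{P})$ and form a basis of $\H_X$. By the moderate growth assumption, $f(X)\in \mathrm{L}^2$, so $\proj_{H_X}(f(X))$ is well defined. It is the unique element $Z=\sum_{k=1}^n c_k X_k$ of $\H_X$ such that
\[
\E\left[X_i\, f(X)\right]=\E\left[X_i Z\right]\qquad\text{for all } i\in[n].
\]
It therefore suffices to check that the candidate $Z=\sum_{k} X_k\, \E[\partial_k f(X)]$ satisfies these $n$ equations. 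Uniqueness then follows from invertibility of the Gram matrix $\Sigma_X$.

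The verification is a direct application of the integration by parts formula \eqref{eq:IPP} with $g=f$. This is legitimate because $f$ is $C^1$ and $f$ and its first-order derivatives are of moderate growth. Moderate growth also guarantees that all the expectations involved are finite, since Gaussian densities dominate $e^{a|x|^2}$ growth for small $a$. The formula gives
\[
\E\left[X_i f(X)\right]=\sum_{\ell=1}^n \sigma^X_{\ell i}\,\E\left[\partial_\ell f(X)\right].
\]
On the other hand, by bilinearity,
\[
\E\left[X_i Z\right]=\sum_{k=1}^n \E\left[X_iX_k\right]\E\left[\partial_k f(X)\right]=\sum_{k=1}^n\sigma^X_{ik}\,\E\left[\partial_k f(X)\right].
\]
The two expressions coincide by symmetry of $\Sigma_X$. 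Hence $f(X)-Z\perp \H_X$, and since $Z\in \H_X$, we conclude $Z=\proj_{H_X}(f(X))$.

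There is no serious obstacle. The only points needing care are the integrability justifications: $f(X)\in \mathrm{L}^2$, and \eqref{eq:IPP} applies under the moderate growth hypothesis. Positive definiteness is used only to say that $(X_k)$ is a basis, so that the coefficients are unique. Even without it, the displayed $Z$ would still be a valid representative of the projection.
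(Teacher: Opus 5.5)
Your proof is correct and is essentially the paper's argument: both characterize $\proj_{H_X}(f(X))$ through the $n$ orthogonality equations against $X_1,\dots,X_n$, evaluate $\E[X_i f(X)]$ by the Gaussian integration by parts formula \eqref{eq:IPP}, and rely on invertibility of $\Sigma_X$. The only difference is presentational: the paper solves $\Sigma_X\alpha=\Sigma_X\E[\nabla f(X)]$ for the unknown coefficients, while you verify your candidate directly. Your side remark is also correct: the displayed $Z$ still represents the projection when $\Sigma_X$ is singular, since the coefficients are then simply non-unique.
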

\begin{proof}
Let $\alpha \in \R^n$ be such that
\[
\proj_{H_X}(f(X)) = \sum_{i=1}^n \alpha_i X_i.
\]
By definition of the orthogonal projection,
for all $i_0\in [n]$, 
\[
 \mathbb{E}\left[ f(X)X_{i_0} \right]= \E \left[ \proj_{H_X}(f(X))X_{i_0} \right]
      =\sum_{i=1}^n \alpha_i\mathbb{E}\left[ X_iX_{i_0} \right]
       =[\Sigma_X.\alpha]_{i_0}.
\]
Using the Gaussian integration by parts formula \eqref{eq:IPP}, we get
  \[
      \mathbb{E}\left[ f(X)X_{i_0} \right] =\sum_{i=1}^n \mathbb{E} \left[ \partial_i f(X) \right] \mathbb{E} \left[ X_iX_{i_0}\right] \\
                                            =[\Sigma_X. \mathbb{E}\left[\nabla f(X) \right] ]_{i_0}. 
  \]
  Hence,
  $\Sigma_X \cdot\alpha = \Sigma_X \cdot \mathbb{E}\left[\nabla f(X) \right]$
  and we conclude that $\alpha = \mathbb{E}\left[\nabla f(X) \right]$
  due to the invertibility of $\Sigma_X$.
\end{proof}

\subsection{The interpolation path formula.}
We use the traditional method of interpolation between $X$ and $Y$  along a monotonic
path to compare  the quantities of interest.

\begin{proposition}\label{proposition:interpolation}
Let $X, Y$ be two independent centered Gaussian random vectors with positive definite
covariance matrices. Let $f$ be a $C^2$  function such that $f$ and its partial derivatives up to order 2 are of moderate growth, and
such that for all $x \in \R^n$ and all $u \in \R$,
\begin{equation} 
	\label{eq:somme}
f(x + u\mathbbm{1}) = f(x)+u. 
\end{equation} 
For all $t\in [0,1]$, let
\[  
  Z_t =\sqrt{1-t} \, X + \sqrt{t} \, Y
\mathrm{ \ and \ }
 \varphi(t) = \mathbb{E} \left[ ( \proj_{H_{Z_t}^\perp}(f(Z_t)) )^2 \right].
\]
Then, one has
\[
\begin{aligned}
\varphi'(t) = 
\frac{1}{2} \sum_{i\neq j} M_{ij} 
 &
\bigg( \mathbb{E}\left[ \partial_if(Z_t) \partial_j f(Z_t) \right] -
\mathbb{E}\left[ \partial_i f(Z_t) \right] \mathbb{E} \left[ \partial_j f(Z_t)\right] 
\\
+ & \
\mathbb{E} \left[ \proj_{H_{Z_t}^\perp}(f(Z_t))\partial^2_{ij} f(Z_t) \right] 
\bigg)
\end{aligned}
\]
where $M_{ij} = \d_X(i,j)^2-\d_Y(i,j)^2$.
\end{proposition}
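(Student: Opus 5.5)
Our approach is to first write $\varphi$ explicitly using Lemma \ref{lemma:expression_projection}, and then differentiate with the classical Gaussian interpolation formula. The covariance of $Z_t$ is $(1-t)\Sigma_X + t\Sigma_Y$, which is positive definite for all $t\in[0,1]$. So Lemma \ref{lemma:expression_projection} applies and gives $\proj_{H_{Z_t}}(f(Z_t)) = \sum_k Z_{t,k}\,\E[\partial_k f(Z_t)]$. By Pythagoras,
\[
\varphi(t) = \E\left[f(Z_t)^2\right] - \E\left[\Big(\sum_k Z_{t,k}\, a_k(t)\Big)^2\right] = \E\left[f(Z_t)^2\right] - a(t)^T \Sigma_t\, a(t),
\]
where $a(t)=\E[\nabla f(Z_t)]$ and $\Sigma_t=(1-t)\Sigma_X+t\Sigma_Y$.

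For any $C^2$ function $F$ of moderate growth, we use the standard formula $\frac{d}{dt}\E F(Z_t) = \frac12\sum_{i,j}(\sigma^Y_{ij}-\sigma^X_{ij})\,\E[\partial^2_{ij}F(Z_t)]$. This follows by differentiating under the expectation, which moderate growth justifies, and applying \eqref{eq:IPP} to $X$ and to $Y$ separately, using independence. We apply it to $F=f^2$, whose Hessian is $2\partial_if\partial_jf+2f\partial^2_{ij}f$. We also apply it to $F=\partial_k f$, which gives $a_k'(t)=\frac12\sum_{i,j}(\sigma^Y_{ij}-\sigma^X_{ij})\E[\partial^3_{ijk}f]$. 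This involves third derivatives, which we do not control. So instead of differentiating $a$ directly, we will compute $\frac{d}{dt}\big(a^T\Sigma_t a\big) = a^T(\Sigma_Y-\Sigma_X)a + 2 a^T\Sigma_t a'$. We will then rewrite $\Sigma_t a'$ through \eqref{eq:IPP} as $\E[Z_{t,\ell}\,\cdot\,]$-type expressions that involve only second derivatives: $\sum_k \sigma^t_{\ell k}\E[\partial_k \partial^2_{ij} f(Z_t)] = \E[Z_{t,\ell}\partial^2_{ij}f(Z_t)]$. This gives $2a^T\Sigma_t a' = \sum_{i,j}(\sigma^Y_{ij}-\sigma^X_{ij})\E[\proj_{H_{Z_t}}(f(Z_t))\,\partial^2_{ij}f(Z_t)]$. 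To make this rigorous without assuming $f\in C^3$, we will either mollify $f$ (the mollification preserves \eqref{eq:somme}) and pass to the limit, or derive the identity directly by differentiating $\E[Z_{t,\ell}\partial_k f(Z_t)]$ in a form that never needs a third derivative.

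Combining the two pieces gives
\[
\varphi'(t)=\frac12\sum_{i,j}(\sigma^Y_{ij}-\sigma^X_{ij})\Big(\E[\partial_if\partial_jf]-\E[\partial_if]\E[\partial_jf]+\E[\proj_{H_{Z_t}^\perp}(f)\,\partial^2_{ij}f]\Big),
\]
where everything is evaluated at $Z_t$. The last step converts $\sigma^Y-\sigma^X$ into $M$ using \eqref{eq:somme}. Differentiating \eqref{eq:somme} in $u$ gives $\sum_j\partial_jf=1$ and $\sum_j\partial^2_{ij}f=0$. Hence for each fixed $i$, the bracket $B_{ij}$ satisfies $\sum_j B_{ij}=\E[\partial_i f]-\E[\partial_if]+0=0$, and $B$ is symmetric. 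Writing $\sigma^Y_{ij}-\sigma^X_{ij}=\frac12\big((\sigma^Y_{ii}-\sigma^X_{ii})+(\sigma^Y_{jj}-\sigma^X_{jj}) + M_{ij}\big)$, the diagonal-variance contributions vanish after summing over $j$ (respectively $i$), because the row sums of $B$ are zero. Since $M_{ii}=0$, what remains is $\frac12\cdot\frac12\sum_{i\ne j}M_{ij}B_{ij}$ times $2$ from the symmetric pairing. We will track the constant carefully to land on the stated factor $\frac12$.

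The main obstacle is the derivative of the projection term, specifically avoiding third derivatives of $f$. We will try the mollification route first, since \eqref{eq:somme} and moderate growth survive convolution with a compactly supported kernel and the final formula involves only derivatives up to order 2.
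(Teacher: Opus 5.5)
Your argument is correct but follows a different route from the paper's. The paper does not split $\varphi$ by Pythagoras. It differentiates $\proj_{H_{Z_t}^\perp}(f(Z_t))=f(Z_t)-\langle Z_t,a(t)\rangle$ directly, and the term $\langle Z_t,a'(t)\rangle$ drops out because $\E[\proj_{H_{Z_t}^\perp}(f(Z_t))\,Z_t]=0$. It then applies \eqref{eq:IPP} in $X$ and in $Y$ to the product $\proj_{H_{Z_t}^\perp}(f(Z_t))\,(\partial_if(Z_t)-a_i(t))$. The $x_j$-derivative of this product is $\sqrt{1-t}\,\big[(\partial_jf-a_j)(\partial_if-a_i)+\proj_{H_{Z_t}^\perp}(f(Z_t))\,\partial^2_{ij}f\big]$, so only second derivatives ever occur and no smoothing is needed.

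In your split $\varphi=\E[f(Z_t)^2]-a^T\Sigma_ta$, the term $2a^T\Sigma_ta'$ does not vanish, and that is exactly where the third derivatives come from. What you gain is modularity: you use only the generic formula for $\frac{d}{dt}\E F(Z_t)$, and your split is the one in Remark \ref{remark:main_theorem}. What you pay is a mollification-and-limit step, which does go through. Convolution with a compactly supported kernel preserves \eqref{eq:somme} and moderate growth, including for the third derivatives of the mollified function. Moreover $\Sigma_t$ is uniformly nondegenerate on $[0,1]$, so you can pass to the limit in $\varphi_\varepsilon(t)-\varphi_\varepsilon(0)=\int_0^t\varphi_\varepsilon'(s)\,\d s$.

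Your ``direct'' alternative, as phrased, does not clearly avoid $\partial^3 f$. The clean $C^2$ version of your route is to differentiate $b(t)=\E[Z_tf(Z_t)]=\Sigma_ta(t)$. The second derivatives $\partial^2_{ij}(z_\ell f)$ involve only second derivatives of $f$, and then $\Sigma_ta'=b'-(\Sigma_Y-\Sigma_X)a$ yields your identity for $2a^T\Sigma_ta'$.

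One correction on constants. Combining $\frac{d}{dt}\E[f(Z_t)^2]=\sum_{i,j}(\sigma^Y_{ij}-\sigma^X_{ij})\E[\partial_if\,\partial_jf+f\,\partial^2_{ij}f]$ with $\frac{d}{dt}(a^T\Sigma_ta)=\sum_{i,j}(\sigma^Y_{ij}-\sigma^X_{ij})\big(a_ia_j+\E[\proj_{H_{Z_t}}(f(Z_t))\,\partial^2_{ij}f]\big)$ gives prefactor $1$, not $\frac12$; this is \eqref{eq:derivative} in the paper. Substituting $\sigma^Y_{ij}-\sigma^X_{ij}=\frac12\big(M_{ij}+(\sigma^Y_{ii}-\sigma^X_{ii})+(\sigma^Y_{jj}-\sigma^X_{jj})\big)$ and using the zero row sums of your bracket $B_{ij}$ then gives the stated $\frac12\sum_{i\neq j}M_{ij}B_{ij}$ at once. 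There is no extra factor $2$ from a ``symmetric pairing'', since $\sum_{i\ne j}$ already runs over ordered pairs. Your two slips cancel, but as written neither step is right.
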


\begin{proof}
Let $t\in [0,1]$. By lemma~\ref{lemma:expression_projection}, one has
\begin{equation}
	\label{eq:firstformula}
\proj_{H_{Z_t}^\perp} (f(Z_t)) = f(Z_t)-\sum_{i=1}^{n} (Z_{t})_{i} \E \left[ \partial_{i} f(Z_t) \right] 
                               = f(Z_t) - \langle Z_t, \E \left[  \nabla f(Z_t) \right] \rangle. 
\end{equation}
Therefore
\[
\frac{\d}{\d t}\left( \proj_{H_{Z_t}^\perp}(f(Z_t))\right)
= \left \langle  \frac{\d}{\d t} Z_t, \nabla f(Z_t) - \E \left[ \nabla f(Z_t) \right] \right \rangle
- \left \langle  Z_t, \frac{\d}{\d t} \E \left[ \nabla f(Z_t) \right] \right \rangle
\]
and 
\[
\varphi'(t) = 2 \E  \left[ \proj_{H_{Z_t}^\perp}(f(Z_t)) 
	   \left(  \langle  \frac{\d}{\d t} Z_t, \nabla f(Z_t) - \E \left[  \nabla f(Z_t) \right]  \rangle
	 - \langle  Z_t, \frac{\d}{\d t} \E \left[ \nabla f(Z_t) \right] \rangle \right) \right].
\]
By definition of the orthogonal projection, 
\[
\E \left[  \proj_{H_{Z_t}^\perp}(f(Z_t)) Z_t \right] = 0
\]
hence
\[
\varphi'(t) =  2 \, \E \left[  \proj_{H_{Z_t}^\perp}(f(Z_t))
 \left\langle  \frac{\d}{\d t} Z_t, \nabla f(Z_t) - \E \left[  \nabla f(Z_t) \right]  \right\rangle \right].
\]
Moreover,
\[
\frac{\d}{\d t} Z_t = \left( -\frac{1}{2\sqrt{1-t}}X +\frac{1}{2\sqrt{t}} Y \right).
\]  
Therefore
\[ 
\begin{aligned}
\varphi'(t)&=-\frac{1}{\sqrt{1-t}} \sum_{i=1}^n \E \left[  X_i \proj_{H_{Z_t}^\perp}(f(Z_t)) \left( \partial_i f(Z_t) - \E \left[  \partial_i f(Z_t) \right]  \right) \right] \\
           &\quad +\frac{1}{\sqrt{t}} \sum_{i=1}^n \E \left[  Y_i \proj_{H_{Z_t}^\perp}(f(Z_t)) \left( \partial_i f(Z_t) - \E \left[  \partial_i f(Z_t) \right]   \right) \right].
  \end{aligned} 
\]
Using \eqref{eq:firstformula}, for all $i, j \in [n]$, one has
  \begin{align*}
    \frac{1}{\sqrt{1-t}  }\partial_{x_j} \left(  \proj_{H_{Z_t}^\perp}(f(Z_t)) \left( \partial_i f(Z_t) - \E \left[ \partial_i f(Z_t) \right]  \right)  \right)
    & = 
    \\
    (\partial_j f(Z_t) - \E \left[ \partial_j f(Z_t) \right]  ) (\partial_i f(Z_t) - \E \left[ \partial_i f(Z_t) \right] ) 
    & +    \proj_{H_{Z_t}^\perp}(f(Z_t))\partial_{ij}^2 f(Z_t)  
    \end{align*}
and
\begin{align*}
 \frac{1}{\sqrt{t}} \partial_{y_j} \left(  \proj_{H_{Z_t}^\perp}(f(Z_t)) \left( \partial_i f(Z_t) - \E \left[ \partial_i f(Z_t) \right] \right)  \right)
  & = 
  \\
     (\partial_j f(Z_t) - \E \left[ \partial_j f(Z_t) \right]  ) (\partial_i f(Z_t) - \E \left[ \partial_i f(Z_t) \right]  ) 
  & +  \proj_{H_{Z_t}^\perp}(f(Z_t))\partial_{ij}^2 f(Z_t). 
    \end{align*}
By the independence of $X$ and $Y$,  and using the Gaussian
integration by parts formula \eqref{eq:IPP}, we obtain 
\begin{align*}
\varphi'(t)&= -\frac{1}{\sqrt{1-t}} \sum_{i,j} \sigma_{ij}^X \E \left[ \partial_{x_j} \left(  \proj_{H_{Z_t}^\perp}(f(Z_t))\left( \partial_i f(Z_t) - \E  \left[ \partial_i f(Z_t) \right]  \right)   \right) \right] \\
 &\quad +\frac{1}{\sqrt{t}} \sum_{i,j} \sigma_{ij}^Y \E \left[ \partial_{y_j} \left(  \proj_{H_{Z_t}^\perp}(f(Z_t))\left( \partial_i f(Z_t) - \E \left[  \partial_i f(Z_t)  \right] \right)   \right) \right]
\end{align*}
from which we conclude that
\begin{equation} 
\label{eq:derivative}
 \begin{split}
   \varphi'(t)=\sum_{1\leq i,j \leq n} \left( \sigma_{ij}^Y-\sigma_{ij}^X\right) \Big( &
   \E \left[ \partial_i f(Z_t) \partial_j f(Z_t) \right] - \E \left[ \partial_i f(Z_t)\right] \E \left[ \partial_j f(Z_t) \right] 
   \\
  &+ \E \left[   \proj_{H_{Z_t}^\perp} ( f(Z_t)) \partial_{ij}^2 f(Z_t) \right]   \Big).
\end{split}
\end{equation}
For all $i,j \in [n], \sigma_{ij}^Y-\sigma_{ij}^X = \frac{1}{2} \left( M_{ij} - N_i-N_j \right)$ 
where 
$N_i= \sigma_{ii}^X-\sigma_{ii}^Y$ and
$N_j= \sigma_{jj}^X-\sigma_{jj}^Y$.
By \eqref{eq:somme}, for all
$x\in \R^n$, the function $u\in \R \mapsto f(x+u\mathbbm{1})-u$ is
constant. Its derivative is therefore equal to zero and differentiating the relation with respect to $x_j$ shows the following identities:
\[ 
\sum_{i=1}^n \partial_i f = 1, \quad \mathrm{and} \quad
\mathrm{for \ all \ } j\in [n], \quad \sum_{i=1}^n
\partial_{ij}^2 f=0.
\]
This implies that
\[ 
\begin{aligned}
\sum_{1\leq i,j \leq n} N_j \Big( &
\E \left[ \partial_i f(Z_t) \partial_j f(Z_t) \right] - \E \left[ \partial_i f(Z_t)\right] \E \left[ \partial_j f(Z_t) \right] 
\\
&+ \E \left[   \proj_{H_{Z_t}^\perp} ( f(Z_t)) \partial_{ij}^2 f(Z_t) \right]   \Big) = 0
\end{aligned}
\]
 and the same with $N_i$ instead of $N_j$. By \eqref{eq:derivative}, we conclude that
 \[
 \begin{aligned}
 	\varphi'(t) = 
 	\frac{1}{2} \sum_{i\neq j} M_{ij} 
 	\Big( &
 	\E \left[ \partial_i f(Z_t) \partial_j f(Z_t) \right] - \E \left[ \partial_i f(Z_t)\right] \E \left[ \partial_j f(Z_t) \right] 
 	\\
 	&+ \E \left[   \proj_{H_{Z_t}^\perp} ( f(Z_t)) \partial_{ij}^2 f(Z_t) \right]   \Big)
 \end{aligned}
 \]
 since $M_{ii} = 0$ for all $i=1, \ldots, n$.
\end{proof}

\subsection{Proof of theorem \ref{theorem:main}}
First of all we note that $X$ and $Y$ can be replaced by independent
copies. Moreover, by approximation, we can also assume that $\Sigma_X$
and $\Sigma_Y$ are positive definite. Using the same notation as in Proposition \ref{proposition:interpolation} and defining by $\tilde{f}$ the function $f+\lambda$, we see by Lemma \ref{lemma:expression_projection} that for all $t \in [0,1],$
\[
\proj_{H_{Z_t}^\perp} ( \tilde{f}) (Z_t) = \proj_{H_{Z_t}^\perp} (f(Z_t))  +\lambda  
= f(Z_t) - \langle Z_t, \E \nabla f (Z_t) \rangle + \lambda.
\] 
Let $\tilde{\varphi}$ be the function defined on $[0,1]$ by
\[ 
\tilde{\varphi} (t) =\E \left[ \left(  \proj_{H_{Z_t}^\perp} ( \tilde{f}) (Z_t)  \right)^2 \right] =  \E \left[ \left(  \proj_{H_{Z_t}^\perp} (f(Z_t)) +\lambda  \right)^2 \right].
\]
Since $f$ satisfies \eqref{thm:a_somme}, it is clear that assumption \eqref{eq:somme} is verified for $\tilde{f}$ and we can apply Proposition \ref{proposition:interpolation} to deduce that 
\begin{align}
	\label{eq:tildephi}
\tilde{\varphi} '(t)  &=\frac{1}{2} \sum_{i\neq j} M_{ij} 
		\Big( \E \left[  \partial_i f(Z_t) \partial_j f(Z_t) \right] -\E \left[ \partial_i f(Z_t)\right] \E \left[ \partial_j f(Z_t) \right]
		\\
		\nonumber
		& \qquad \qquad + \ \lambda \, \E \left[  \partial_{ij}^2 f(Z_t) \right] + \E \left[ \proj_{H_{Z_t}^\perp}(f(Z_t))\partial^2_{ij} f(Z_t) \right] \Big).
\end{align}
By assumption \eqref{thm:e_lambda} and \eqref{thm:b_positive}, we know that for all $i \ne j$, 
\begin{equation}
	\label{eq:terme1}
\E  \left[ \partial_i f(Z_t) \partial_j f(Z_t) \right] +  \lambda \, \E  \left[ \partial_{ij}^2 f(Z_t) \right] \le 0,
\quad
-\E \left[ \partial_i f(Z_t) \right] \E \left[ \partial_j f(Z_t) \right] \le 0.
\end{equation}
By assumption \eqref{thm:a_somme}, for all $x \in \R^n$, the function $u \mapsto f(x+u\mathbbm{1})-u$ is constant and taking its derivative, we get
\[
\sum_{i=1}^{n} \partial_{i}(f) = 1.
\]
Combining with assumption \eqref{thm:b_positive} and  Lemma \ref{lemma:expression_projection}, we deduce that
\[
\proj_{H_{Z_t}}(f (Z_t)) = \sum_{i=1}^{n} (Z_t)_i \E \left[ \partial_i f(Z_t) \right]  \le \max_{1 \le i \le n} (Z_t)_i.
\]
Asssumption \eqref{thm:d_max} allows to conclude that
\[
\proj_{H_{Z_t}^\perp}(f(Z_t)) = f(Z_t) - \proj_{H_{Z_t}}(f(Z_t)) \ge 0
\]
and lastly, by assumption \eqref{thm:c_negative}, we get that for all $i \ne j$,
\begin{equation}
	\label{eq:terme2}
\proj_{H_{Z_t}^\perp}(f(Z_t))\partial^2_{ij} f(Z_t) \le 0.
\end{equation}
Since for all $i \ne j, M_{ij} \le 0$, combining \eqref{eq:terme1} and \eqref{eq:terme2} with \eqref{eq:tildephi}, we conclude that 
for all $t \in [0,1],$
$\tilde{\varphi}'(t) \ge 0$. Thus,  $\tilde{\varphi}(0) \le \tilde{\varphi}(1)$ which is the conclusion of Theorem \ref{theorem:main}.
\qed
\begin{remark} \label{remark:main_theorem} The conclusion of 
Theorem \ref{theorem:main} can be reformulated as
\begin{eqnarray}
	\label{eq:other}
	\mathbb{E} \left[  f(X)^2 \right] +2\lambda \mathbb{E} f(X)- \langle \Sigma_X\mathbb{E}\left[\nabla f(X) \right] , \mathbb{E}\left[\nabla f(X) \right] \rangle 
	& 
	\\
	\nonumber
	\leq \ \mathbb{E} \left[ f(Y)^2 \right]  +2\lambda
		\mathbb{E} f(Y) - \langle\Sigma_Y\mathbb{E}\left[ \nabla f(Y) \right],
		\mathbb{E}\left[ \nabla f(Y) \right] \rangle.
\end{eqnarray}
\end{remark}
\noindent
To prove it, note that by definition of the orthogonal projection and by Lemma ~\ref{lemma:expression_projection}, one has
$\E \left[ f(X) \proj_{H_X}(f(X)) \right] = \E \left[ (\proj_{H_X}(f(X)))^2 \right]$ and  $\E\left[\proj_{H_X}(f(X))\right]=0$.
Hence
\begin{align*}
\E \left[ \left(  \proj_{H_X^\perp}(f(X))+\lambda \right)^2 \right]
& = \E \left[ \left(  f(X)-\proj_{H_X}(f(X))+\lambda  \right)^2 \right]
\\
& = \E \left[ \left(  f(X) +\lambda  \right)^2 \right] - \E \left[  (\proj_{H_X}(f(X)))^2 \right].
\end{align*}
Moreover, by Lemma ~\ref{lemma:expression_projection}, 
\[
\E \left[ \left(\proj_{H_X}(f(X)) \right)^2 \right] = \langle\Sigma_X\E\left[\nabla f(X)\right],
\E\left[ \nabla f(X)\right] \rangle.
\] 
The same argument holds for $Y$ and it shows that the conclusion of Theorem \ref{theorem:main} is equivalent to \eqref{eq:other}.

\subsection{Application}
\label{sub:appl}
Recall that the function $f_\beta$,  defined for all $x \in \R^n$ by
\[
f_{\beta}(x) = \frac{1}{\beta} \log\left( \sum_{k=1}^n \e^{\beta x_k} \right),
\]
is $C^{\infty}$, is a good approximation of $\max_{1 \le i \le n} (x_i$) by \eqref{eq:approx},  and that its partial derivatives may be easily computed:
\begin{equation}
		\label{eq:derivees}
		\mathrm{for \ all \ } i \in [n], \partial_{i}f_{\beta} = p_i,
		\qquad 
		\mathrm{and,}
		\qquad
		\mathrm{for \ all \ } i \ne j, \ \partial^{2}_{ij} f_{\beta} = - \beta p_i p_j
\end{equation}
where for all $x \in \R^n$ and $i \in [n],$
\[
p_i(x) =  \frac{\e^{\beta x_i}}{\sum_{k=1}^n \e^{\beta x_k}}.
\]
It is therefore easy to construct a family of functions that will satisfy the assumptions of Theorem \ref{theorem:main}.
\begin{lemma}
Let $\mu \in [0,+\infty)^n$ and $\beta>0$. We define the function
${h_{\beta}}$ for every $x \in \R^n$ by $ h_{\beta}(x)=f_{\beta}(x+\mu).$ 
Then $h_{\beta}$ satisfies assumptions \eqref{thm:a_somme}, \eqref{thm:b_positive}, \eqref{thm:c_negative}, \eqref{thm:d_max} and \eqref{thm:e_lambda} of Theorem~\ref{theorem:main} with $\lambda=\frac{1}{\beta}$.
\end{lemma}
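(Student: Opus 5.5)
The plan is to check the five assumptions one by one. Everything rests on the explicit derivatives \eqref{eq:derivees}, evaluated at the shifted point $x+\mu$. Since $h_\beta(x)=f_\beta(x+\mu)$, the chain rule with a translation gives
\[
\partial_i h_\beta(x)=p_i(x+\mu), \qquad \partial^2_{ij}h_\beta(x)=-\beta\, p_i(x+\mu)p_j(x+\mu) \quad (i\neq j).
\]
Smoothness of $h_\beta$ and moderate growth of $h_\beta$ and its derivatives up to order $2$ carry over from $f_\beta$, because a translation by $\mu$ preserves moderate growth: $|x+\mu|^2\ge |x|^2/2-|\mu|^2$, so one may absorb the shift by taking a smaller $a$.

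For \eqref{thm:a_somme}, I will factor $\e^{\beta u}$ out of the sum:
\[
f_\beta(x+\mu+u\mathbbm{1})=\tfrac1\beta\log\Big(\e^{\beta u}\textstyle\sum_k \e^{\beta(x_k+\mu_k)}\Big)=f_\beta(x+\mu)+u .
\]
Assumption \eqref{thm:b_positive} holds because $p_i>0$. Assumption \eqref{thm:c_negative} holds because $-\beta p_ip_j\le 0$ for $\beta>0$.

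For \eqref{thm:e_lambda}, take $\lambda=1/\beta$. Then for $i\neq j$,
\[
\partial_i h_\beta\,\partial_j h_\beta+\tfrac1\beta\,\partial^2_{ij}h_\beta=p_ip_j-p_ip_j=0\le 0 .
\]

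The only step that actually uses $\mu\in[0,+\infty)^n$ is \eqref{thm:d_max}. By the left inequality in \eqref{eq:approx},
\[
h_\beta(x)=f_\beta(x+\mu)\ge \max_i(x_i+\mu_i)\ge \max_i x_i ,
\]
where the last step uses $\mu_i\ge 0$. I do not expect a genuine obstacle here; the point needing care is simply noticing that nonnegativity of $\mu$ is exactly what makes \eqref{thm:d_max} hold.
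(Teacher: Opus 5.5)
Your proof is correct and follows the paper's argument essentially verbatim. You compute the derivatives through the translated weights $p_i(x+\mu)$, read off (a)--(c) and (e) (the latter holding with equality), and combine $\mu_i\ge 0$ with the left inequality in \eqref{eq:approx} for (d). Your extra remark on moderate growth goes beyond what the lemma claims, and it is fine, with one small correction: the inequality that actually absorbs the shift is the symmetric one, $|x+\mu|^2\le 2|x|^2+2|\mu|^2$, which holds by the same argument.
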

\begin{proof}
By the expression of $f_\beta$, it is clear that $h_{\beta}(x+u \mathbbm{1})=h_{\beta}(x)+ u$ for all
$x\in \R^n$ and $u\in \R$. 
From \eqref{eq:derivees}, for all $i\ne j  \in [n],$ and all $x\in \R^n$
 \[
   \begin{aligned}
        & \partial_{i} h_{\beta} (x) =p_i(x+\mu)\ge 0, \\
        & \partial_{ij}^2 h_{\beta}(x) =-\beta p_i(x+\mu)p_j(x+\mu)\leq 0, \\
        & \partial_{i} h_{\beta} (x)\partial_{j} h_{\beta} (x)+\frac{1}{\beta} \partial_{ij}^2 h_{\beta}(x) =0
  \end{aligned}
\]
so that \eqref{thm:a_somme}, \eqref{thm:b_positive}, \eqref{thm:c_negative} and \eqref{thm:e_lambda} are satisfied. Since for all $i \in \{1, \ldots, n\}$, $\mu_i \ge 0$, it is clear that $h_{\beta}(x) \ge \max_{1 \le i \le n}(x_i + \mu_i) \ge \max_{1 \le i \le n} x_i$ and 
\eqref{thm:d_max} is satisfied.
\end{proof}
Corollary \ref{theorem:application_main_theorem} is proven by applying Theorem \ref{theorem:main} to the function $h_{\beta}$. And the moreover part follows by sending $\beta$ to $+\infty$.
\qed

It can be observed that for all $x \in \R^n$, 
\[
f_\beta(x) = \sup_{\theta \in \Delta_n} \left( \langle \theta, x \rangle - \frac{1}{\beta} \sum_{i=1}^n \theta_i \log(\theta_i) \right)
\]
where 
\[
\Delta_n := \{ \theta=(\theta_1, \ldots, \theta_n) \in (\R_+)^n \mathrm{\ such \ that \ } \sum_{i=1}^n \theta_i = 1  \}
\]
is a $n-1$-dimensional regular simplex in $\R^{n}$.
Indeed, the function 
\[
\theta \mapsto \langle \theta, x \rangle - \frac{1}{\beta} \sum_{i=1}^n \theta_i \log(\theta_i)
\]
is strictly concave on $(\R_+)^n$ and
 using Lagrange multipliers, we get that for all $x \in \R^n$, the supremum over $\Delta_n$ is attained at the unique point $\theta^*$ whose coordinates are
\[
\theta^*_i = \frac{e^{\beta x_i}}{\sum_{k=1}^{n} e^{\beta x_k}}. 
\]
It is possible to generalize this situation as follows. 
\begin{proposition}
	\label{prop:Omer}
Let $\psi: [0,1] \to \R$ be a  $C^2$  function such that for all $\alpha \in [0,1]$, $0 \le \psi''(\alpha) \le M$ and $(n-1) \psi(0) + \psi(1) = 0$.
Let $\Delta_n$ be the regular simplex defined by
\[
\Delta_n := \{ \theta=(\theta_1, \ldots, \theta_n) \in (\R_+)^n \mathrm{\ such \ that \ } \sum_{i=1}^n \theta_i = 1  \}
\]
and let $F_\psi: \R^n \to \R$ be given, for all $x \in \R^n$, by
\begin{equation}
	\label{eq:defF}
	F_{\psi}(x) = \sup_{\theta \in \Delta_n} \left( \langle \theta, x \rangle - \frac{1}{\beta} \sum_{i=1}^n ( \theta_i \log(\theta_i) + \psi(\theta_i) ) \right).
\end{equation}
Then, for all $\beta>0$, the function $F_\psi$ satisfies conditions $(a)-(e)$ of Theorem \ref{theorem:main} with $\lambda = (M+1)^2/\beta$.
\end{proposition}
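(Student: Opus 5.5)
The plan is to view $F_\psi$ as a Legendre-type transform over the simplex of the strictly convex function
\[
\Phi(\theta)=\frac1\beta\sum_{i=1}^n\bigl(\theta_i\log\theta_i+\psi(\theta_i)\bigr).
\]
We then identify $\nabla F_\psi$ with the unique maximizer $\theta^*(x)$ and compute the Hessian through the implicit function theorem, as in the case $\psi=0$, which recovers $f_\beta$.

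\emph{Step 1: existence, interiority and regularity of the maximizer.} Each coordinate function $\theta_i\mapsto\theta_i\log\theta_i+\psi(\theta_i)$ has second derivative $1/\theta_i+\psi''(\theta_i)\ge 1/\theta_i>0$. Hence the objective in \eqref{eq:defF} is strictly concave, and the supremum over the compact set $\Delta_n$ is attained at a unique $\theta^*(x)$.

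Since $\psi'$ is bounded on $[0,1]$ while $\frac{d}{d\theta}(\theta\log\theta)\to-\infty$ as $\theta\to0^+$, moving mass onto a vanishing coordinate strictly increases the objective. Therefore $\theta^*$ lies in the relative interior of $\Delta_n$. With a Lagrange multiplier $\nu(x)$, setting $g(s)=\log s+1+\psi'(s)$, the optimality conditions read
\[
\beta x_i-g(\theta^*_i)=\beta\nu(x)\quad\text{for all } i,\qquad \sum_i\theta_i^*=1 .
\]
Because $g$ is $C^1$ with $g'>0$, the implicit function theorem gives that $\theta^*$ and $\nu$ are $C^1$ in $x$. The envelope theorem then gives $\nabla F_\psi=\theta^*$, so $F_\psi$ is $C^2$.

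I expect this step to be the main technical obstacle: checking the interiority and the non-degeneracy of the Jacobian (a bordered diagonal matrix with positive diagonal) carefully.

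\emph{Step 2: conditions (a), (b), (d) and moderate growth.*} Replacing $x$ by $x+u\mathbbm 1$ adds $u\sum\theta_i=u$ to the objective, which gives (a). Condition (b) holds because $\partial_iF_\psi=\theta_i^*\ge0$.

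For (d), evaluate the objective at the vertex $e_k$, using the convention $0\log0=0$. This gives
\[
F_\psi(x)\ge x_k-\frac1\beta\bigl(\psi(1)+(n-1)\psi(0)\bigr)=x_k ,
\]
by the normalization $(n-1)\psi(0)+\psi(1)=0$.

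For moderate growth, note that $F_\psi(x)\le\max_i x_i+C$ because $\Phi$ is bounded on $\Delta_n$. The gradient lies in $\Delta_n$, and Step 3 shows the Hessian entries are bounded by $\beta$.

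\emph{Step 3: the Hessian, and conditions (c) and (e).} Put
\[
w_i=\frac{1}{g'(\theta_i^*)}=\frac{\theta_i^*}{1+\theta_i^*\psi''(\theta_i^*)}>0,\qquad W=\sum_k w_k .
\]
Differentiating the optimality condition in $x_j$ gives $\partial_j\theta_i^*=\beta w_i(\delta_{ij}-\partial_j\nu)$. Summing over $i$ and using $\sum_i\theta_i^*=1$ yields $\partial_j\nu=w_j/W$. Hence
\[
\partial^2_{ij}F_\psi=\beta\Bigl(w_i\delta_{ij}-\frac{w_iw_j}{W}\Bigr),
\]
so $\partial^2_{ij}F_\psi=-\beta w_iw_j/W\le0$ for $i\ne j$, which is (c).

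For (e), use $\theta_i^*\le1$ and $\psi''\le M$ to get $w_i\ge\theta_i^*/(1+M)$. Also $w_i\le\theta_i^*$, so $W\le\sum_k\theta_k^*=1$. Consequently
\[
-\partial^2_{ij}F_\psi=\beta\frac{w_iw_j}{W}\ge\beta\frac{\theta_i^*\theta_j^*}{(1+M)^2}.
\]
This gives $\partial_iF_\psi\,\partial_jF_\psi+\frac{(M+1)^2}{\beta}\,\partial^2_{ij}F_\psi\le0$, which is (e) with $\lambda=(M+1)^2/\beta$.
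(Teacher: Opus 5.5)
Your proposal is correct and follows the paper's proof. The paper likewise identifies $\nabla F_\psi$ with the unique interior maximizer (via Danskin's theorem) and differentiates the Lagrange condition to get $\partial^2_{ij}F_\psi=-\beta/\bigl(\phi''(\theta_i^*)\phi''(\theta_j^*)\sum_k 1/\phi''(\theta_k^*)\bigr)$. It then concludes (e) from $1/\alpha\le\phi''(\alpha)\le (M+1)/\alpha$, which is exactly your bound $\theta_i^*/(1+M)\le w_i\le\theta_i^*$ with $w_i=1/\phi''(\theta_i^*)$. Your justifications of interiority, of the non-degenerate Jacobian and of moderate growth supply details the paper leaves implicit.
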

\begin{proof}
By definition, for all $\theta \in \Delta_n$, $\langle \theta, \mathbbm{1} \rangle = 1$. Hence, $F_\psi$ satisfies condition $(a)$ 
of Theorem \ref{theorem:main}. 
For each $k \in [n]$, we select $\theta^{(k)}$ as the vector in $\R^n$ with all coordinates equal to zero, except for the $k^{th}$-coordinate,
which is set to $1$. We evaluate at this point $\theta^{(k)} \in \Delta_n$ and since $(n-1) \psi(0) + \psi(1) = 0$, we get that
$F_\psi(x) \ge x_k$. This shows condition $(d)$ of Theorem \ref{theorem:main}.

Handling the conditions on the derivatives of $F$ is more delicate.
This is why we made a specific choice of function in the supremum.
Since $\psi'' \ge 0$ on $[0,1]$, the function $\phi : \alpha \mapsto \alpha \log (\alpha) + \psi(\alpha)$ is strictly convex on $[0,1]$ and $\phi'' > 0$ on $(0,1)$. Moreover $\phi'$ is increasing on $(0,1)$ and is bijective from $(0,1)$ to $(-\infty, 1 + \psi'(1))$. We denote by $(\phi')^{-1}$ its reciprocal.
By strict convexity, the supremum in \eqref{eq:defF} is attained at a unique point $\theta^*$ in the interior of $\Delta_n$. 
From Danskin's theorem (see \cite{Danskin1967}, and \cite[Theorem 10.31]{Rockafellar}), the function $F$ is differentiable at every point $x \in \R^n$ and 
for all $i \in [n]$, $\partial_i F_\psi = \theta_i ^* \ge 0$. Thus, condition $(b)$ of Theorem \ref{theorem:main} is satisfied. 
Moreover, using Lagrange multipliers and the implicit function theorem, we get that there exists a differentiable function $c : \R^n \to \R$ such that for all $x \in \R^n$ and for all $i \in [n],$
\[
\theta_i^* =  (\phi')^{-1} (\beta x_i - c(x)) 
\qquad
\mathrm{and}
\qquad
\sum_{k=1}^{n} (\phi')^{-1} (\beta x_k - c(x)) = 1.
\]
By differentiation, for all $j \in [n]$, we obtain
\[
\partial_j c(x) \sum_{k=1}^{n} \frac{1}{\phi''(\theta_k^*)} = \frac{\beta}{\phi''(\theta_j^*)}.
\]
From this, we deduce that for all $j \ne i$,
\[
\partial^2_{ji} F_\psi = 
\partial_j \theta_i^* = - \partial_j c(x) \frac{1}{\phi''(\theta_i^*)}
= - \frac{\beta}{ \phi''(\theta_j^*) \phi''(\theta_i^*) \sum_{k=1}^{n} \frac{1}{\phi''(\theta_k^*)}}.
\]
Thus, condition $(c)$ is satisfied. Computing the second derivative of $\phi$, we get that for all 
$\alpha \in (0,1), \alpha \phi''(\alpha) = 1 + \alpha \psi''(\alpha)$ from which we deduce that $\phi''(\alpha) \ge 1/\alpha$ and $\alpha \phi''(\alpha) \le (M +1)$. 
Thus, a simple computation shows that 
\[
(\partial_i F_{\psi}) (\partial_j F_{\psi}) + \lambda \partial^2_{ij} F_\psi
= 
\theta_i^* \theta_j^* - \frac{(M+1)^2}{\phi''(\theta_j^*) \phi''(\theta_i^*) \sum_{k=1}^{n} \frac{1}{\phi''(\theta_k^*)}}
\le 0
\]
since $\sum_{k=1}^{n} \frac{1}{\phi''(\theta_k^*)} \le \sum_{k=1}^n \theta_k^* \le 1$. This shows condition $(e)$.
\end{proof}

\section{Proof of Theorem
    \ref{theorem:chevet_max_expectation_particular_case}}
\label{sec:application}
Let $U, X$ be two centered Gaussian random vectors such that
\begin{equation}
	\label{def_X}
\Sigma_{U} = \begin{pmatrix}
 A_1 &  0   & \dots & 0 \\
 0   &  A_2 & \dots & 0 \\
 \vdots &   &\ddots & \\
 0 & \dots & 0 & A_n
\end{pmatrix},  
\quad    
\Sigma_{X} = \begin{pmatrix}
A & 0 & \dots & 0 \\
0 & A  & \dots & 0 \\
\vdots &  &\ddots & \\
  0 &  \dots & 0 & A
\end{pmatrix} \in \mathcal{M}_{2n}(\R)
\end{equation}
where $A=\begin{pmatrix}  1 & -1 \\ -1 & 1 \end{pmatrix}$, $A_i=\begin{pmatrix} 1 & b_i \\ b_i & 1\end{pmatrix}$
with $b_1, \dots, b_n \in [-1,1].$
 
For all $i \in  [2n]$, one has $\E \left[ U_i^2 \right] = \E \left[ X_i^2 \right]$ and for $i \ne j$, 
$\sigma_{ij}^X \le \sigma_{ij}^U$  hence for all $i \ne j$, $\d_U(i,j)^2 \le \d_X(i,j)^2$. 
From  Corollary~\ref{theorem:application_main_theorem}, we conclude that
\[
\E \left[ \left(\proj_{H_U^\perp}(f_{\beta})(U)+\frac{1}{\beta} \right)^2 \right]
\leq 
\E  \left[ \left(\proj_{H_X^\perp}(f_{\beta})(X)+\frac{1}{\beta} \right)^2 \right].
\]
Therefore, it remains to prove that
 \begin{equation} 
 	\label{equation:conjecture_particular_case}
 	\E \left[ \left(\proj_{H_X^\perp}(f_{\beta})(X)+\frac{1}{\beta} \right)^2 \right]
 	\leq 
 	\E \left[ \left(\proj_{H_S^\perp}(f_{\beta})(S)+\frac{1}{\beta} \right)^2  \right]
 \end{equation}
and this will be the goal of the rest of this section. To
this end, we will again use a method of interpolation along a
monotonic path. To preserve simple covariance matrices on it, the
following lemma will be useful.
\begin{lemma}
	\label{lem:distri}
 Let $f$ be a $C^1$ function on $\R^n$ such that for all $x\in \R^n,$ and for all $u\in \R$, $f(x+u\mathbbm{1}) =
 f(x)+u.$ Then in distribution,
\[ 
\proj_{H_S^\perp} (f(S))  \overset{\mathrm{d}}{=} 
 \proj_{H_{\sqrt{\frac{n}{n-1}}G}^\perp}(f \left( \sqrt{
            \frac{n}{n-1}} \ G \right) )
\]
where $G \sim {\mathcal{N}}(0, \mathrm{Id})$ is a standard Gaussian random vector. 
\end{lemma}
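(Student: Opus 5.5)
The key observation is that $S$ is the orthogonal projection of $W:=\sqrt{\frac{n}{n-1}}\,G$ onto the hyperplane $\mathbbm{1}^\perp$. Indeed, set $S' = W - \bar W \mathbbm{1}$ with $\bar W = \frac1n\sum_k W_k$. Then
\[
\E[S'_iS'_j]=\frac{n}{n-1}\Big(\delta_{ij}-\frac1n\Big),
\]
which equals $1$ on the diagonal and $-\frac{1}{n-1}$ off it. So $S'\overset{\mathrm d}{=} S$. Moreover $\bar W$ is a centered Gaussian variable independent of $S'$, since $\mathrm{Cov}(\bar W, S'_i)=\frac{n}{n-1}\big(\frac1n-\frac1n\big)=0$. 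By property (a), $f(W)=f(S')+\bar W$. So the plan is to compute both projections explicitly and check they coincide as random variables, after which equality in law follows from $S'\overset{\mathrm d}{=}S$.

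For $W$, the covariance matrix is positive definite, so Lemma \ref{lemma:expression_projection} gives
\[
\proj_{H_W^\perp}(f(W))=f(W)-\langle W,\E\nabla f(W)\rangle .
\]
Since $\nabla f(W)=\nabla f(S')$ (gradients are invariant under shifts along $\mathbbm{1}$ by (a)), we get $\E\nabla f(W)=\E\nabla f(S')=:v$. Differentiating (a) gives $\langle v,\mathbbm{1}\rangle=1$. Hence
\[
\langle W,v\rangle=\langle S',v\rangle+\bar W,
\]
and therefore
\[
\proj_{H_W^\perp}(f(W))=f(S')+\bar W-\langle S',v\rangle-\bar W=f(S')-\langle S',v\rangle .
\]

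For $S'$ (equivalently $S$), the covariance matrix is singular: $H_{S'}$ has dimension $n-1$ and $\sum_i S'_i=0$. So Lemma \ref{lemma:expression_projection} cannot be invoked directly, and this is the step I expect to need care. I would check directly that $Z:=f(S')-\langle S',v\rangle$ is orthogonal to every $S'_i$; since $\langle S',v\rangle\in H_{S'}$, this identifies $Z$ as $\proj_{H_{S'}^\perp}(f(S'))$. The integration by parts formula \eqref{eq:IPP} holds for any centered Gaussian vector, degenerate or not, so
\[
\E[S'_i f(S')]=\sum_\ell \sigma^{S}_{i\ell}\,v_\ell=(\Sigma_S v)_i ,
\qquad
\E[S'_i\langle S',v\rangle]=(\Sigma_S v)_i .
\]
These agree, so $\E[S'_iZ]=0$ for all $i$, as required. (Alternatively, one could approximate $\Sigma_S$ by positive definite matrices, but the direct check is cleaner.)

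Combining the two computations, $\proj_{H_W^\perp}(f(W))=\proj_{H_{S'}^\perp}(f(S'))$ almost surely. Since the latter is a measurable function of $S'$ determined only through $v=\E\nabla f(S')$, which depends only on the law of $S'$, and $S'\overset{\mathrm d}{=}S$, its law equals that of $\proj_{H_S^\perp}(f(S))$. Throughout I assume the moderate growth hypotheses needed for \eqref{eq:IPP}; these hold in the application to $f_\beta$.
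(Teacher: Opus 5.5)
Your proof is correct and follows essentially the same route as the paper. Both realize $S$ as $\sqrt{\frac{n}{n-1}}\,(G-\overline{G}\mathbbm{1})$ and use the shift-invariance of $\nabla f$ together with $\sum_i \partial_i f = 1$ to show that the two projections coincide under this coupling. Your direct orthogonality check via \eqref{eq:IPP} for the degenerate vector $S$ is a genuine improvement: the paper applies Lemma~\ref{lemma:expression_projection} to $S$, even though that lemma is stated only for positive definite covariance matrices.
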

\begin{proof}
We start by noticing that in distribution,
\[
S \overset{\mathrm{d}}{=}
\sqrt{\dfrac{n}{n-1}} \ (G-\overline{G} \mathbbm{1}),
\]
where
$\overline{G}=\dfrac{1}{n} (G_1 + \ldots G_n).$ 
Since $f(x+u\mathbbm{1}) = f(x)+u$, one has for all $x \in \R^n$, all $u \in \R$ and all $\ell \in [n]$, 
$\partial_\ell f(x+u\mathbbm{1})=\partial_\ell f(x).$
Therefore, in distribution,
\[
f(S) \overset{\mathrm{d}}{=} f\left(\sqrt{\dfrac{n}{n-1}} G\right) - \sqrt{\dfrac{n}{n-1}} \overline{G}
\quad \mathrm{and} \quad
\partial_\ell f(S) \overset{\mathrm{d}}{=} \partial_\ell f\left(\sqrt{\dfrac{n}{n-1}} G\right).
\]
Using again that for any fixed $x \in \R^n$, the function $u \mapsto f(x + u \mathbbm{1}) - u$ is constant, we deduce that $\sum_{i=1}^n \partial_i f=1.$ With Lemma \ref{lemma:expression_projection}, we conclude that in distribution
\[
\begin{aligned}
 \proj_{H_S^\perp}(f(S)) & = f(S)-\sum_{l=1}^n S_l \, \E\left[ \partial_l f(S) \right] 
 \\
&  \overset{\mathrm{d}}{=} f \left( \sqrt{\frac{n}{n-1}} G \right) - 
\sum_{l=1}^n \sqrt{\frac{n}{n-1}}G_l \ \E\left[ \partial_l f \left(\sqrt{\frac{n}{n-1}} G \right)  \right]  
\\
& \overset{\mathrm{d}}{=} \proj_{H_{\sqrt{\frac{n}{n-1}}G}^\perp}(f ( \left( \sqrt{\frac{n}{n-1}} G \right)).
\end{aligned}
\]
\end{proof}

From Lemma \ref{lem:distri}, inequality \eqref{equation:conjecture_particular_case} is therefore equivalent to 
\begin{equation}
	\label{eq:tobeproven}
\E  \left[ \left(\proj_{H_X^\perp}(f_{\beta}(X))+\frac{1}{\beta} \right)^2 \right]
  \leq 
\E  \left[ \left(\proj_{H_{\sqrt{\frac{2n}{2n-1}}G}^\perp}(f_{\beta}\left(\sqrt{\frac{2n}{2n-1}}G \right))+\frac{1}{\beta} \right)^2 \right]. 
\end{equation}
Let $Y := \sqrt{\frac{2n}{2n-1} } \, G$ be a Gaussian random vector independent of $X$, we define as in the proof of Proposition \ref{proposition:interpolation} the path
$Z_t = \sqrt{1-t} \, X + \sqrt{t} \, Y.$
\\
Let $\tilde{f_{\beta}} := f_{\beta}+\frac{1}{\beta}$. Then 
$\proj_{H_{Z_t}^\perp}
(\tilde{f_\beta}(Z_t)) =
\proj_{H_{Z_t}^\perp}
(f_\beta (Z_t)) +\frac{1}{\beta}
$
and if
$\tilde{\varphi}$ denotes the function defined  by
$
\tilde{\varphi}(t) 
= 
\E \left[ \left( \proj_{H_{Z_t}^\perp}
    (\tilde{f_\beta}(Z_t)) \right)^2 \right]
$
on $[0,1]$,
we get from Proposition \ref{proposition:interpolation}
\[
\begin{aligned}
	\tilde{\varphi}'(t) = 
	\frac{1}{2} \sum_{i\neq j} M_{ij} 
	&
	\Big( \mathbb{E} \left[ \partial_i \tilde{f_{\beta}}(Z_t) \partial_j \tilde{f_{\beta}}(Z_t) \right] -
	\mathbb{E} \left[ \partial_i \tilde{f_{\beta}}(Z_t) \right] \mathbb{E}\left[ \partial_j \tilde{f_{\beta}}(Z_t)\right] 
	\\
	+ & \
	\mathbb{E}\left[ \proj_{H_{Z_t}^\perp}(\tilde{f_{\beta}}(Z_t))\partial^2_{ij} \tilde{f_{\beta}}(Z_t) \right] 
	\Big)
\end{aligned}
\]
where 
\[
\frac{1}{2} \, M_{ij} = \frac{1}{2} (\d_X(i,j)^2-\d_Y(i,j)^2) = \frac{1}{2}\left( \sigma_{ii}^X + \sigma_{jj}^X - \sigma_{ii}^Y - \sigma_{jj}^Y \right) + \sigma_{ij}^Y - \sigma_{ij}^X.
\]
From \eqref{eq:derivees}, we know that $\partial_i \tilde{f_{\beta}} = p_i$, $\partial_{ij}^2 \tilde{f_{\beta}} = - \beta p_i p_j$ for $i \ne j$ therefore, with the abuse of notation that $p_i = p_i(Z_t)$, we get
\begin{align*}
\tilde{\varphi}'(t)& = \frac{1}{2} \sum_{i \ne j} M_{ij}
\left(
\E \left[  p_i p_j \right] - \E \left[ p_i \right] \E \left[ p_j \right] - \E \left[ \big(\beta \, \proj_{H_{Z_t}^\perp} (f_\beta (Z_t)) + 1 \big) p_i p_j \right]
\right)
\\
 & = \frac{1}{2} \sum_{i \ne j } (-M_{ij}) \left( \E \left[ p_i \right]  \E \left[ p_j \right] 
 + \beta  \E  \left[ \proj_{H_{Z_t}^\perp} (f_\beta (Z_t))  p_i p_j \right]  \right).
\end{align*}	
Recalling the definition of $\Sigma_X$ from \eqref{def_X} and of $\Sigma_Y = \frac{2n}{2n-1} \mathrm{Id}$, we get that 
\[
- \frac{1}{2} \, M =
\begin{pmatrix}
	\boxed{
		\begin{matrix}
			0 & -\frac{2(n-1)}{2n-1} \\
			-\frac{2(n-1)}{2n-1} & 0
		\end{matrix}
	}
	&
	\begin{matrix}
		\frac{1}{2n-1} & \frac{1}{2n-1} \\
		\frac{1}{2n-1} & \frac{1}{2n-1}
	\end{matrix}
	&
	\cdots
	&
	\begin{matrix}
		\frac{1}{2n-1} & \frac{1}{2n-1} \\
		\frac{1}{2n-1} & \frac{1}{2n-1}
	\end{matrix}
	\\[1em]
	\begin{matrix}
		\frac{1}{2n-1} & \frac{1}{2n-1} \\
		\frac{1}{2n-1} & \frac{1}{2n-1}
	\end{matrix}
	&
	\boxed{
		\begin{matrix}
			0 & -\frac{2(n-1)}{2n-1} \\
			-\frac{2(n-1)}{2n-1} & 0
		\end{matrix}
	}
	&
	\cdots
	&
	\begin{matrix}
		\frac{1}{2n-1} & \frac{1}{2n-1} \\
		\frac{1}{2n-1} & \frac{1}{2n-1}
	\end{matrix}
	\\[1em]
	\vdots & \vdots & \ddots & \vdots \\[1em]
	\begin{matrix}
		\frac{1}{2n-1} & \frac{1}{2n-1} \\
		\frac{1}{2n-1} & \frac{1}{2n-1}
	\end{matrix}
	&
	\begin{matrix}
		\frac{1}{2n-1} & \frac{1}{2n-1} \\
		\frac{1}{2n-1} & \frac{1}{2n-1}
	\end{matrix}
	&
	\cdots
	&
	\boxed{
		\begin{matrix}
			0 & -\frac{2(n-1)}{2n-1} \\
			-\frac{2(n-1)}{2n-1} & 0
		\end{matrix}
	}
\end{pmatrix}
\]
which means that $- \frac{1}{2} \, M = B + N$ where $B$ is the $n$-block diagonal formed with the $2 \times 2$ matrix 
$
\left(
\begin{matrix}
	0 & - \frac{2(n-1)}{2n-1}
	\\
	- \frac{2(n-1)}{2n-1} & 0
\end{matrix}
\right)
$
and $N$ is the $2n \times 2n$ matrix with constant entries equal to $1/(2n-1)$ outside the block matrices obtained for $B$.

 We will now exploit the symmetries of $Z_t$ inheritated by the symmetries of $X$ and $Y$. For any $k=1, \ldots, n$,
let $s_k \in \mathfrak{S}_{2n}$ be the symmetry defined by $s_k(1) = 2k-1$, $s_k(2) = 2k$ and $t_k \in \mathfrak{S}_{2n}$ be the transposition defined by $t_k(2k-1) = 2k$. For every $\sigma \in \mathfrak{S}_{2n}$, denote by $(Z_t)_\sigma$ the vector of coordinates $(Z_t)_{\sigma(i)}$. Therefore, the law of $Z_t$ is the same as the law of $(Z_t)_{\sigma}$ for every $\sigma$ obtained by compositions of some of the $s_k$'s and $t_\ell$'s. Moreover, for all $\sigma \in \mathfrak{S}_{2n},$
\[
f_\beta (Z_t) = f_\beta ((Z_t)_{\sigma}),
\quad
p_i \left((Z_t)_{\sigma} \right) = p_{\sigma(i)}(Z_t)
\]
and
\[
\proj_{H_{(Z_t)_\sigma}^\perp} (f_\beta ((Z_t)_\sigma)) = 
f_\beta ((Z_t)_\sigma) - \sum_{i=1}^{2n} (Z_t)_{\sigma(i)} \E \left[ p_i((Z_t)_\sigma) \right] = 
\proj_{H_{Z_t}^\perp} (f_\beta (Z_t)).
\]
From these symmetries, we deduce  that for all $i, j \in [2n]$,
\begin{eqnarray}
	\label{1}
 & \E p_i = \E p_1 = 1/2n,
\\
\label{2}
B_{ij} \,\E  \left[ \proj_{H_{Z_t}^\perp} (f_\beta (Z_t)) p_i p_j \right] & = B_{ij} \,\E  \left[ \proj_{H_{Z_t}^\perp} (f_\beta (Z_t)) p_1 p_2 \right],
\\
\label{3}
N_{ij} \, \E  \left[ \proj_{H_{Z_t}^\perp} (f_\beta (Z_t)) p_i p_j \right] & = N_{ij} \,\E  \left[ \proj_{H_{Z_t}^\perp} (f_\beta (Z_t)) p_1 p_3 \right].
\end{eqnarray}
Observe that the sum of the coefficients of each rows of the matrix $M$ is equal to zero. Thus,
$M \mathbbm{1} = 0$, and from \eqref{1}, we get that
\[
\sum_{1\leq i,j \leq 2 n} M_{ij}  \E \left[  p_i \right]  \E \left[ p_j \right] = 0.
\]
With \eqref{2} and \eqref{3}, we get
\begin{align*}
	\tilde{\varphi}'(t) = & \ \beta  \E  \left[ \proj_{H_{Z_t}^\perp} (f_\beta (Z_t)) 
\sum_{1\leq i,j \leq 2 n} \left(-\frac{1}{2} M_{ij} \right)  p_i p_j 
\right] 
\\
= & \ \beta  \E  \left[ \proj_{H_{Z_t}^\perp} (f_\beta (Z_t)) 
\sum_{1\leq i,j \leq 2 n}  (B_{ij} p_1 p_2 + N_{ij} p_1 p_3)
\right]
\\
= & \ \beta \E  \left[ \proj_{H_{Z_t}^\perp} (f_\beta (Z_t)) 
\left( - \frac{4n (n-1)}{2n-1} p_1 p_2  + \frac{4n^2 - 4n}{2n-1} p_1 p_3\right) \right]
\\
= & \ \frac{4n (n-1)}{2n-1} \beta \E  \left[ \proj_{H_{Z_t}^\perp} (f_\beta (Z_t)) \ p_1(p_3 - p_2) \right].
\end{align*}
Using again the invariance under $t_2 \circ t_1 \circ s_2$, 
$Z_{t}\overset{d}{=} (Z_4, Z_3, Z_2, Z_1, \dots)$, and we get that
\[
\tilde{\varphi}'(t)= \frac{4n(n-1)}{2n-1}\beta \E\left[ 
    \proj_{H_{Z_t}^\perp}( f_{\beta}(Z_t)) p_{4}(p_{2} - p_{3})
  \right].
\] 
Adding both equations shows that
\begin{equation}
	\label{eq:derivee_fin}
2\tilde{\varphi}'(t)= \frac{4n(n-1)}{2n-1}\beta \E\left[ 
\proj_{H_{Z_t}^\perp}( f_{\beta}(Z_t)) (p_{1}- p_{4})(p_{3} -
p_{2})\right]. 
\end{equation}
From \eqref{1} and Lemma \ref{lemma:expression_projection}, 
\[
\proj_{H_{Z_t}^\perp}( f_{\beta}(Z_t)) = f_\beta(Z_t) - \frac{1}{2n} \sum_{i=1}^{2n} (Z_t)_i
\]
hence, denoting the density of $Z_t$ by $h$, we find that $\tilde{\varphi}'(t)$ has the same sign as $I$, where
\begin{align*}
I & =   \E\left[
\proj_{H_{Z_t}^\perp}( f_{\beta}(Z_t)) (p_{1}- p_{4})(p_{3} -
p_{2})\right]
\\
& = 
\int_{\mathbb{R}^n} \left(f_{\beta}(z) - \frac{1}{2n}\sum_{i=1}^{2n}
z_i \right) \frac{(\e^{\beta z_1} - \e^{\beta
		z_4})(\e^{\beta z_3} - \e^{\beta z_2})}{\left( \sum_{i=1}^{2n}
	\e^{\beta z_i} \right)^2} h(z)\dz.
\end{align*}
We make the change of variable $z \mapsto \widehat{z}$  where $\widehat{z}_2 = z_3$, $\widehat{z}_3 = z_2$ and the other coordinates remaining unchanged hence
\[
I = \int_{\mathbb{R}^n} \left(f_{\beta}(z) - \frac{1}{2n}\sum_{i=1}^{2n}
z_i \right) \frac{(\e^{\beta z_1} - \e^{\beta
		z_4})(\e^{\beta z_2} - \e^{\beta z_3})}{\left( \sum_{i=1}^{2n}
	\e^{\beta z_i} \right)^2} h(\widehat{z})\dz .
\]
Adding the two expressions for $I$, we get 
\begin{equation}
	\label{eq:final}
2I = \int_{\mathbb{R}^n} \left(f_{\beta}(x) - \frac{1}{2n} \sum_{i=1}^{2n}
    z_i \right) \frac{(\e^{\beta z_1} - \e^{\beta z_4})(\e^{\beta z_3} - \e^{\beta z_2})}{\left( \sum_{i=1}^{2n}
      \e^{\beta z_i} \right)^2} \left( h(z) -h(\widehat{z})
  \right)\dz . 
\end{equation}
By the definition of $\Sigma_X$ from \eqref{def_X} and of $\Sigma_Y = \frac{2n}{2n-1} \mathrm{Id}$, we know that
the covariance matrix of $Z_t$ is
\[ \Sigma_{Z_t} = \begin{pmatrix}
  A_t & 0 & 0 & \dots & 0 \\
  0 & A_t & 0 & \dots & 0 \\
  0 & 0 & A_t & \dots & 0 \\
  \vdots & &  &\ddots & \\
  0 & 0 & \dots & 0 & A_t
\end{pmatrix} \in \mathcal{M}_{2n}(\mathbb{R})\]
where $A_t = \begin{pmatrix} a & b \\ b & a \end{pmatrix}$,  $a=1+\dfrac{t}{2n-1}>0$ and $b=t-1 <0$. 
Therefore
\[
A_t^{-1} = \frac{a}{a^2 - b^2} \, \mathrm{Id} - \frac{b}{a^2 - b^2} \begin{pmatrix} 0 & 1 \\ 1 & 0 \end{pmatrix}
\]
and the density $h$ has the following form
\[
  \begin{aligned}
    h(z)&=K_t\exp\left( -\frac{1}{2} \langle \Sigma_{Z_t}^{-1} z, z\rangle \right) \quad \textrm{ with } K_t>0
    \\
        &=K_t\exp\left(-\frac{1}{2}\times \frac{1}{a^2-b^2} \left( a\lvert z\rvert^2 -2b z_1 z_2 - 2b z_3 z_4 - \dots -2b x_{2n-1}x_{2n}\right) \right).
  \end{aligned}
\]
Since 
\[
\exp\left(u\right) - \exp\left(v \right)
= 
2 \exp\left(\frac{u+v}{2} \right) \sinh\left(\frac{u-v}{2}\right)
\]
we get that 
\[
  \begin{aligned}
    h(z)-h(\widehat{z})&=K_t\exp\left( -\frac{1}{2(a^2-b^2)} ( a\lvert z \rvert^2 -2bz_5z_6-\dots -2bz_{2n-1}z_{2n}) \right)\\
                       &\quad \times \exp \left(\frac{b}{a^2-b^2} \frac{z_1z_2+z_3z_4+z_1z_3+z_2z_4}{2} \right)  \\
                       &\quad \times 2\sinh\left( \frac{-b}{2(a^2-b^2)}(z_1-z_4)(z_3-z_2) \right).
  \end{aligned}
\]
Since $a^2 - b^2 > 0$,  we conclude that
\[
\frac{(\e^{\beta z_1} - \e^{\beta z_4})(\e^{\beta z_3} - \e^{\beta z_2})}{\left( \sum_{i=1}^{2n} \e^{\beta z_i} \right)^2}
(h(z)-h(\widehat{z})) 
\geq 0. 
\] 
It remains to remember that for
all $z\in \R^n$, one has
\[ f_{\beta}(z) - \frac{1}{2n}\sum_{i=1}^{2n} z_i \geqslant 0\] because
$f_{\beta}(z) \geqslant \max_{1 \le i \le 2n} z_i$.
Therefore, the integrand of $I$ in \eqref{eq:final} is positive and $I \ge 0$. Eventually  $\tilde{\varphi}'(t)\geq 0$
and $\tilde{\varphi}(0)\leq \tilde{\varphi}(1)$. This proves \eqref{eq:tobeproven} and concludes the proof of Theorem \ref{theorem:chevet_max_expectation_particular_case}.

{\bf Acknowledgements.} We thank Bernard Maurey for very fruitful discussions and Omer Friedland for sharing with us the idea of Proposition \ref{prop:Omer}. 
We are deeply grateful to the two anonymous referees for their insightful and constructive comments, which significantly improved the presentation of the paper. 

%\printbibliography
\bibliographystyle{plain} \bibliography{biblio}

\end{document}